\theoremstyle{plain}
\newtheorem{Theorem}{Theorem}[section]
\newtheorem{Corollary}[Theorem]{Corollary}
\newtheorem{Lemma}[Theorem]{Lemma}
\newtheorem{Observation}[Theorem]{Observation}
\newenvironment{Proof}
{\begin{trivlist}\item[]{{\sc Proof.}}}{\hfill{$\square$}\noindent\end{trivlist}}
\newcommand{\gaussm}[3]{\genfrac{[}{]}{0pt}{}{#1}{#2}_{#3}}
\newcommand{\PG}[2]{\operatorname{PG}(#1,#2)}
\newcommand{\F}[2]{\mathbb{F}_{#2}^{#1}}
\theoremstyle{definition}
\theoremstyle{remark}
\begin{document}


\title{Improved upper bounds for partial spreads}

\author{Sascha Kurz$^\star$}
\address{Department of Mathematics, University of Bayreuth, 95440 Bayreuth, Germany}
\email{sascha.kurz@uni-bayreuth.de}
\thanks{$^\star$ The work of the author was supported by the ICT COST Action IC1104
and grant KU 2430/3-1 -- Integer Linear Programming Models for Subspace Codes and Finite Geometry
from the German Research Foundation.}

\date{}

\begin{abstract}
  A \emph{partial $(k-1)$-spread} in $\PG{n-1}{q}$ is a collection of $(k-1)$-dimensional 
  subspaces with trivial intersection, i.e., each \emph{point} is covered at most once.
  So far the maximum size of a partial $(k-1)$-spread in $\PG{n-1}{q}$ was known 
  for the cases $n\equiv 0\pmod k$, $n\equiv 1\pmod k$ and $n\equiv 2\pmod k$ with the 
  additional requirements $q=2$ and $k=3$. We completely resolve the case 
  $n\equiv 2\pmod k$ for the binary case $q=2$.
  
  \medskip
  
  \noindent
  \textbf{Keywords:} Galois geometry, partial spreads, constant dimension codes, 
  vector space partitions, orthogonal arrays,  and $(s,r,\mu)$-nets\\ 
  \textbf{MSC:} 51E23; 05B15, 05B40, 11T71, 94B25 
\end{abstract}

\maketitle

\section{Introduction}

\noindent
For a prime power $q>1$ let $\mathbb{F}_q$ be the finite field with
$q$ elements and $\mathbb{F}_q^n$ the standard vector space of dimension
$n\ge 1$ over $\mathbb{F}_q$. The set of all subspaces of $\mathbb{F}_q^n$,  
ordered by the incidence relation $\subseteq$, is called \emph{$(n-1)$-dimensional 
projective geometry over $\mathbb{F}_q$} and commonly denoted by $\PG{n-1}{q}$.
Let $G_q(n,k)$ denote the set of all $k$-dimensional subspaces in $\F{n}{q}$.\footnote{Instead 
of $\PG{n-1}{q}$ we will mainly use the notation $\F{n}{q}$ in the following.}
The so-called \emph{Gaussian binomial coefficient} $\gaussm{n}{k}{q}$, where 
$\gaussm{n}{k}{q}=\prod_{i=n-k+1}^{n} (1-q^i)\,/\,\prod_{i=1}^{k} (1-q^i)$ for $0\le k\le n$ 
and $\gaussm{n}{k}{q}=0$ otherwise, gives the respective cardinality $\left|G_q(n,k)\right|$.  
A \emph{partial $k$-spread} in $\F{n}{q}$ is a collection of $k$-dimensional 
subspaces with trivial intersection such that each \emph{point}\footnote{In the projective space the dimensions 
are commonly one less compared to the consideration of subspaces in $\F{n}{q}$.}, 
i.e., each element of $G_q(n,1)$, is covered at most once. A point that is not covered by any of the 
$k$-dimensional subspaces of the partial $k$-spread is called a \emph{hole}. We call the number of $k$-dimensional subspaces 
of a given partial $k$-spread its size and we call it \emph{maximum} if it has the largest possible size.
Bounds for the sizes of maximum partial $k$-spreads were heavily studied in the past. Here we are able 
to determine the exact value for an infinite series of cases of parameters $n$ and $k$.

Besides the geometric interest in maximum partial $k$-spreads, they also can be seen as a special case of
$q$ subspace codes in (network) coding theory. Here the codewords are elements of $\PG{n-1}{q}$. Two widely 
used distance measures for subspace codes (motivated by an information-theoretic analysis of the 
Koetter--Kschischang--Silva model, see e.g.\ \cite{silva2008rank}) are the so-called
\emph{subspace distance} $d_S(U,V):=\dim(U+V)-\dim(U\cap V)=2\cdot\dim(U+V)-\dim(U)-\dim(V)$ and the so-called 
\emph{injection distance} $d_I(U,V):=\max\left\{\dim(U),\dim(V)\right\}-\dim(U\cap V)$. For $D\subseteq\{0,\dots,n\}$ 
we denote by $A_q(n,d;D)$ the maximum cardinality of a subspace code over $\F{n}{q}$ with minimum subspace distance 
at least $d$, where we additionally assume that the dimensions of the codewords are contained in $D$. The most unrestricted 
case is given by $D=\{0,\dots,n\}$. The other extreme, $D=\{k\}$ is called \emph{constant dimension} case and the corresponding 
codes are called \emph{constant dimension codes}. As an abbreviation we use the notation $A_q(n,d;k):=A_q(n,d;\{k\})$. Note 
that $d_S(U,V)=2\cdot d_I(U,V)\in 2\cdot\mathbb{N}$ in the constant dimension case. Bounds on $A_q(n,d;D)$ have been intensively 
studied in the last years, see e.g.\ \cite{etzion2013problems}. With this notation, the size of a maximum partial $k$-spread 
in $\F{n}{q}$ is given by $A_q(n,2k;k)$.

The remaining part of the paper is structured as follows. We will briefly review some known results on $A_q(n,2k;k)$ 
and discuss their relation with our main result in Section~\ref{sec_known_results}. In Section~\ref{sec_preparation} we 
will provide the technical tools that are then used to prove the main result in Section~\ref{sec_main_result}. We close 
with a conclusion listing some further implications and future lines of research in Section~\ref{sec_conclusion}.     

\section{Known bounds for partial spreads}
\label{sec_known_results}

\noindent
Counting the points in $\F{n}{q}$ and $\F{k}{q}$ gives the obvious upper bound $A_q(n,2k;k)\le \frac{\gaussm{n}{1}{q}}
{\gaussm{k}{1}{q}}=\frac{q^n-1}{q^k-1}$. If equality is attained, then one speaks of a \emph{$k$-spread}.

\begin{Theorem}
  \label{thm_spread}(\cite{andre1954nicht}; see also \cite[p.~29]{dembowski2012finite}, Result 2.1 in \cite{beutelspacher1975partial})
  $\mathbb{F}_q^n$ contains a $k$-spread if and only if $k$ divides $n$, where we assume $1\le k\le n$ and $k,n\in\mathbb{N}$.
\end{Theorem}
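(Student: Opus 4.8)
The plan is to prove the two implications separately: the "only if" direction via a counting argument combined with an elementary number-theoretic fact, and the "if" direction via an explicit field-extension construction.

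For the forward direction I would argue as follows. Suppose $\mathcal{S}$ is a $k$-spread in $\mathbb{F}_q^n$. By definition the members of $\mathcal{S}$ pairwise meet only in the zero vector, and every point of $G_q(n,1)$ lies on some member; equivalently, the punctured subspaces $U\setminus\{0\}$, $U\in\mathcal{S}$, form a partition of $\mathbb{F}_q^n\setminus\{0\}$. Since each such set has cardinality $q^k-1$ while the ambient set has cardinality $q^n-1$, this forces $(q^k-1)\mid(q^n-1)$. It then remains to check that $(q^k-1)\mid(q^n-1)$ implies $k\mid n$: writing $n=ak+b$ with $0\le b<k$ and using $(q^k-1)\mid(q^{ak}-1)$ gives $q^n-1\equiv q^b-1\pmod{q^k-1}$, and since $0\le q^b-1<q^k-1$ this forces $b=0$.

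For the converse, assume $k\mid n$, say $n=mk$. The idea is to pass to the field extension $\mathbb{F}_{q^k}\subseteq\mathbb{F}_{q^n}$, which exists precisely because $k\mid n$, and to identify $\mathbb{F}_q^n$ with $\mathbb{F}_{q^n}$. For each coset representative $g$ of $\mathbb{F}_{q^k}^*$ in $\mathbb{F}_{q^n}^*$, the set $g\cdot\mathbb{F}_{q^k}=\{gx:x\in\mathbb{F}_{q^k}\}$ is a $k$-dimensional $\mathbb{F}_q$-subspace of $\mathbb{F}_{q^n}$, since multiplication by $g$ is an $\mathbb{F}_q$-linear bijection. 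If $g_1\mathbb{F}_{q^k}$ and $g_2\mathbb{F}_{q^k}$ share a nonzero vector then $g_1g_2^{-1}\in\mathbb{F}_{q^k}^*$, so $g_1$ and $g_2$ represent the same coset and the two subspaces coincide; hence distinct members of this family meet trivially. As every nonzero vector of $\mathbb{F}_{q^n}$ lies in one of them, the family is a $k$-spread, of size $(q^n-1)/(q^k-1)$, matching the counting bound.

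The routine parts are the two divisibility computations and the verification that multiplication by $g$ is $\mathbb{F}_q$-linear; the only genuine idea, and the only place where the hypothesis $k\mid n$ is really used, is the passage to the field extension in the construction. An inductive construction is also possible, but the field-theoretic (Desarguesian) spread handles all cases $n=mk$ uniformly in one step and is the cleanest route, so that is the one I would present.
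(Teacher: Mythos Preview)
Your proof is correct and is precisely the classical argument: the counting/divisibility necessity and the Desarguesian spread construction via the field extension $\mathbb{F}_{q^k}\subseteq\mathbb{F}_{q^n}$ for sufficiency. Note, however, that the paper does not actually prove this theorem; it merely cites it as a known result (Andr\'e, Dembowski, Beutelspacher), so there is no in-paper proof to compare against---but your write-up matches the standard proof found in those references.
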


\noindent
If $k$ does not divide $n$, then we can improve the previous upper bound by rounding down to 
$A_q(n,2k;k)\le\left\lfloor\frac{q^n-1}{q^k-1}\right\rfloor$. Here a specific parameterization is useful: 
If one writes the size of a partial $k$-spread in $\F{n}{q}$, where $n=k(t+1)+r$, $1\le r\le k-1$, 
as $A_q(n,2k;k)=q^r\cdot\frac{q^{k(t+1)}-1}{q^k-1}-s$, then $s\ge q-1$ and $s>\frac{q^r-1}{2}-\frac{q^{2r-k}}{5}$ is known, see e.g. 
\cite{eisfeldt}. Furthermore, there exists an example with $s=q^r-1$ in each case, see e.g.\ Observation~\ref{obs_multi_component}, 
leading to the conjecture that the sharp bound is $s\ge q^r-1$. Assuming $q=2$ and $k\ge 4$, our main result in 
Theorem~\ref{thm_spread_exact_value_3} verifies this conjecture for $r=2$, i.e., $s\ge 3$.
Note that $n\equiv {r\pmod k}$, so that the residue class $r$ seems to play a major role. Besides the case of $r=0$, see 
Theorem~\ref{thm_spread}, the next case $r=1$ is solved in full generality:
\begin{Theorem}
  \label{thm_almost_spread}(\cite{beutelspacher1975partial}; see also \cite{hong1972general} for the special case $q=2$)
  For integers $1\le k\le n$ with $n\equiv 1\pmod k$
  we have $A_q(n,2k;k)=\frac{q^n-q}{q^k-1}-q+1=q^1\cdot \frac{q^{n-1}-1}{q^k-1}-q+1=\frac{q^n-q^{k+1}+q^k-1}{q^k-1}$.
\end{Theorem}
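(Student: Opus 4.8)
The plan is to prove the two matching bounds separately, the lower bound being the easy direction. For the lower bound I would simply invoke the general construction: by Observation~\ref{obs_multi_component} (the case $r=1$, i.e. $s=q^r-1=q-1$) there is a partial $k$-spread in $\F{n}{q}$ of size $q\cdot\frac{q^{n-1}-1}{q^k-1}-(q-1)$, which is exactly the claimed value. One can also exhibit it directly: writing $n=k(t+1)+1$, embed $\F{n}{q}$ as a codimension-$(k-1)$ subspace of $\F{k(t+2)}{q}$, fix a $k$-spread of the larger space, and retain those of its members that happen to lie inside $\F{n}{q}$; since every member meets $\F{n}{q}$ in dimension at least $1$, a generic choice of the subspace makes every discarded member meet $\F{n}{q}$ in a single point, and a point count shows the retained members form a partial $k$-spread with exactly $q^k$ holes, which is the desired size. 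So it remains to prove the upper bound, and I may assume $k\ge 2$ (for $k=1$ the statement is the spread case of Theorem~\ref{thm_spread}; and if $n=k+1$, i.e. $t=0$, a partial $k$-spread has at most one element, since two disjoint $k$-spaces would need ambient dimension $2k>n$, which matches the formula).

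For the upper bound, write $M:=\frac{q^{n-1}-1}{q^k-1}$ and let $\mathcal S$ be a partial $k$-spread of size $N=qM-s$; a count of points gives that $\mathcal S$ leaves exactly $B=1+s\gaussm{k}{1}{q}$ holes, and the goal is to show $s\ge q-1$, i.e. to rule out $0\le s\le q-2$. The main device is to count holes hyperplane by hyperplane. For a hyperplane $H$ let $a(H)$ denote the number of members of $\mathcal S$ contained in $H$; every other member meets $H$ in a $(k-1)$-dimensional subspace, so counting covered and uncovered points inside $H$ yields, with $B_H$ the number of holes lying in $H$,
\[
  B_H \;=\; M+s\gaussm{k-1}{1}{q}-a(H)\,q^{k-1}.
\]
In particular $B_H\ge 0$ and $B_H$ is congruent to $M+s\gaussm{k-1}{1}{q}$ modulo $q^{k-1}$, so $B_H\ge\rho$ for every $H$, where $\rho\in\{0,\dots,q^{k-1}-1\}$ is that residue class.

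Now I would play two global counts against each other. Summing $B_H$ over the $\gaussm{n}{1}{q}$ hyperplanes and using that each hole lies in $\gaussm{n-1}{1}{q}$ of them gives $B\cdot\gaussm{n-1}{1}{q}=\sum_H B_H\ge\rho\cdot\gaussm{n}{1}{q}$, hence $B\ge\rho\cdot\frac{q^n-1}{q^{n-1}-1}>\rho q$, so that $B\ge\rho q+1$ whenever $\rho\ge 1$. On the other hand, for $0\le s\le q-2$ and $k\ge 2$ one checks, using $M\equiv 1\pmod{q^{k-1}}$ and $1+s\gaussm{k-1}{1}{q}\le q^{k-1}-1$, that the forced residue is precisely $\rho=1+s\gaussm{k-1}{1}{q}\ge 1$. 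Substituting this together with the exact value $B=1+s\gaussm{k}{1}{q}$ and the identity $\gaussm{k}{1}{q}=1+q\gaussm{k-1}{1}{q}$ turns $B\ge\rho q+1$ into $s\ge q$, contradicting $s\le q-2$. Hence $s\ge q-1$, and combined with the construction $s=q-1$, which is the assertion.

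The point counts producing the expressions for $B$ and $B_H$ are routine. The step I expect to be the crux is spotting the modular constraint $B_H\equiv M+s\gaussm{k-1}{1}{q}\pmod{q^{k-1}}$ and then verifying that, throughout the range $s\le q-2$, the resulting residue $\rho=1+s\gaussm{k-1}{1}{q}$ is large enough that the hyperplane-averaging estimate $B\ge\rho q+1$ strictly exceeds the exact hole count $B=1+s\gaussm{k}{1}{q}$. This comparison is sharp: it just barely fails at $s=q-1$ (there $\rho=0$), which is exactly why $q-1$ is the correct value.
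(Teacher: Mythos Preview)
The paper does not supply its own proof of this theorem; it is quoted from \cite{beutelspacher1975partial} as a known result and only alluded to in Section~\ref{sec_conclusion} as ``the consideration of holes in the projections of partial $k$-spreads in hyperplanes.'' Your argument is correct and is in fact exactly Beutelspacher's classical proof: the lower bound via the multi-component construction of Observation~\ref{obs_multi_component}, and the upper bound by computing $B_H=M+s\gaussm{k-1}{1}{q}-a(H)q^{k-1}$, reading off the residue $\rho$ modulo $q^{k-1}$, and averaging over hyperplanes to force $s\ge q$ whenever $s\le q-2$.

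Two small remarks. First, your alternative geometric description of the lower bound (restricting a spread of $\F{k(t+2)}{q}$ to a generic codimension-$(k-1)$ subspace) is only a sketch; the existence of such a ``generic'' subspace is not automatic and would itself need justification, so it is good that you fall back on Observation~\ref{obs_multi_component}. Second, your aside that $k=1$ reduces to Theorem~\ref{thm_spread} is not literally correct, since the displayed formula evaluates to $\gaussm{n}{1}{q}-q$ rather than $\gaussm{n}{1}{q}$ when $k=1$; but this is a defect of the theorem statement (which tacitly intends $k\ge 2$, as $r=1$ forces), not of your proof, which is complete for $k\ge 2$.
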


The so far best upper bound on $A_q(n,2k;k)$, i.e., the best known lower bound on $s$ is based on:
\begin{Theorem} (Corollary 8 in \cite{nets_and_spreads})
  \label{thm_partial_spread_4}
  If $n=k(t+1)+r$ with $0<r<k$, then 
  $$
    A_q(n,2k;k)\le \sum_{i=0}^{t} q^{ik+r} -\left\lfloor\theta\right\rfloor-1
    =q^r\cdot \frac{q^{k(t+1)}-1}{q^k-1}-\left\lfloor\theta\right\rfloor-1,
  $$
  where $2\theta=\sqrt{1+4q^k(q^k-q^r)}-(2q^k-2q^r+1)$.
\end{Theorem}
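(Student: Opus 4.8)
The starting point is the standard analysis of the set $B$ of holes of a partial $k$-spread $\mathcal S$ of size $m$ in $\F{n}{q}$. If $t=0$ the claimed bound is at least $1=A_q(n,2k;k)$ (any single $k$-subspace is a partial spread, while two distinct $k$-subspaces of $\F{n}{q}$ cannot intersect trivially when $n<2k$), so assume $t\ge 1$, i.e.\ $n\ge 2k$. Counting points gives $b:=|B|=\gaussm{n}{1}{q}-m\gaussm{k}{1}{q}$. For a hyperplane $H$ let $\sigma_H$ be the number of $U\in\mathcal S$ contained in $H$; the $m-\sigma_H$ remaining elements meet $H$ in pairwise trivially intersecting $(k-1)$-subspaces (a common point would lie in two distinct spread elements), so
$$x_H:=|B\cap H|=\gaussm{n-1}{1}{q}-\sigma_H\gaussm{k}{1}{q}-(m-\sigma_H)\gaussm{k-1}{1}{q}=c-\sigma_H q^{k-1},$$
with $c:=\gaussm{n-1}{1}{q}-m\gaussm{k-1}{1}{q}$. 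Thus $x_H\equiv c\pmod{q^{k-1}}$ for every $H$; since also $x_H\ge 0$ this forces $x_H\ge c\bmod q^{k-1}$, and together with $b\equiv c\pmod{q^{k-1}}$ it says that $B$ is a $q^{k-1}$-divisible point set, while maximality of $\mathcal S$ is equivalent to $B$ containing no $k$-dimensional subspace.

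Next I record the two elementary incidence counts over the $\gaussm{n}{1}{q}$ hyperplanes: a fixed $k$-subspace lies on $\gaussm{n-k}{1}{q}$ of them, and two distinct spread elements span a $2k$-subspace, which lies on $\gaussm{n-2k}{1}{q}$ of them, whence
$$\sum_H\sigma_H=m\gaussm{n-k}{1}{q}\qquad\text{and}\qquad\sum_H\sigma_H(\sigma_H-1)=m(m-1)\gaussm{n-2k}{1}{q}.$$
Writing $m=\sum_{i=0}^t q^{ik+r}-s$ one computes $b=\gaussm{r}{1}{q}+s\gaussm{k}{1}{q}$, $qc=b+m-1$, and hence $c\equiv\gaussm{r}{1}{q}+s\gaussm{k-1}{1}{q}\pmod{q^{k-1}}$.

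The heart of the matter is to convert this data --- the $q^{k-1}$-divisibility of $B$, the two moment identities, the integrality and non-negativity of the $\sigma_H$, and the fact that $B$ is $k$-subspace-free --- into a genuinely quadratic lower bound on $s$. The route I would take is to feed both bounds $c\bmod q^{k-1}\le x_H\le c$ into a convexity/second-moment estimate over all hyperplanes, sharpened by the residue condition; an equivalent (and probably cleaner) route passes to the net/transversal design associated with a $q^{k-1}$-divisible set and applies a Bruck/Bose--Bush-type non-embeddability bound to it --- this is where the $(s,r,\mu)$-nets and orthogonal arrays of the abstract enter, and it is also the source of the square root. Either way, the target inequality is
$$s\bigl(s+2q^k-2q^r+1\bigr)>(q^r-1)(q^k-q^r),$$
i.e.\ $s>\theta$ for the positive root $\theta$ of $X^2+(2q^k-2q^r+1)X-(q^r-1)(q^k-q^r)=0$. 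Since the discriminant of this quadratic equals $(2q^k-2q^r+1)^2+4(q^r-1)(q^k-q^r)=1+4q^k(q^k-q^r)$, one has $\theta=\tfrac12\bigl(\sqrt{1+4q^k(q^k-q^r)}-(2q^k-2q^r+1)\bigr)$, which is exactly the $\theta$ of the statement. As $s$ is a non-negative integer, $s>\theta$ gives $s\ge\lfloor\theta\rfloor+1$, hence $A_q(n,2k;k)=m=\sum_{i=0}^t q^{ik+r}-s\le q^r\cdot\frac{q^{k(t+1)}-1}{q^k-1}-\lfloor\theta\rfloor-1$.

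The main obstacle is exactly the extraction of the quadratic bound with the right constant: a convexity estimate using only $\sigma_H\ge 0$ is too weak (it yields merely a bound linear in $s$, useless once $s$ is of order $q^r$), so the residue information $x_H\equiv c\pmod{q^{k-1}}$ --- equivalently the induced net --- has to be used, and the bookkeeping with $c\bmod q^{k-1}=\bigl(\gaussm{r}{1}{q}+s\gaussm{k-1}{1}{q}\bigr)\bmod q^{k-1}$, aided by identities such as $q^{k-1}\gaussm{n-2k}{1}{q}+\gaussm{k-1}{1}{q}\gaussm{n-k}{1}{q}=\gaussm{k}{1}{q}\gaussm{n-k-1}{1}{q}$, is what keeps the estimate in closed form. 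A minor further point is the strictness $s>\theta$ (needed only when $\theta\in\mathbb{Z}$), which follows because equality throughout would force every $\sigma_H\in\{0,\lfloor c/q^{k-1}\rfloor\}$, incompatible with the two moment identities when $0<r<k$. The remaining ingredients --- the point count, the incidence identities, the reparameterisation in $s$, and the discriminant computation --- are routine.
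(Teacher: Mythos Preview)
The paper does not prove this theorem; it is quoted as Corollary~8 of Drake--Freeman and used as a black box throughout Sections~\ref{sec_known_results} and~\ref{sec_main_result}. There is thus no proof in the paper against which to compare your attempt.

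As for the attempt itself: the setup is correct and standard --- the hole set $B$, the hyperplane counts $x_H=c-\sigma_H q^{k-1}$, the resulting $q^{k-1}$-divisibility, the two moment identities for $\sum_H\sigma_H$ and $\sum_H\sigma_H(\sigma_H-1)$, and the reparametrisation $m=\sum_{i=0}^t q^{ik+r}-s$ --- and you have correctly identified the target quadratic inequality $s(s+2q^k-2q^r+1)>(q^r-1)(q^k-q^r)$, whose positive root is indeed the $\theta$ of the statement. But the derivation of that inequality is the entire content of the Drake--Freeman bound, and you do not carry it out: you describe two possible routes (a convexity/second-moment estimate sharpened by the residue condition, or passage to the associated $(\overline{s},\overline{r},\mu)$-net and a Bose--Bush type deficiency bound) and then explicitly label the step ``the main obstacle''. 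Until that step is actually written down --- with the bookkeeping that turns the moment identities and the residue information into the specific constant $(q^r-1)(q^k-q^r)$ --- what you have is a correct outline with its central computation missing, not a proof. Your own concluding paragraph concedes as much.
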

We remark that this theorem is also restated as Theorem~13 in \cite{etzion2013problems} and as Theorem~44 in \cite{etzionsurvey} 
with the small typo of not rounding down $\theta$ ($\Omega$ in their notation). And indeed, the resulting lower bound 
$s\ge \left\lfloor\theta(q,k,r)\right\rfloor+1$ is independent of $n$.
Specializing to the binary case, i.e., $q=2$, we can use the previous results to state exact formulas for
$A_2(n,2k;k)$ for small values of $k\ge 2$.\footnote{Obviously, we have $A_q(n,2;1)=\gaussm{n}{1}{q}$.} 

From Theorem~\ref{thm_spread} and Theorem~\ref{thm_almost_spread} we conclude:
\begin{Corollary}
  For each integer $m\ge 2$ we have
  \begin{itemize}
    \item[(a)] $A_2(2m,4;2)=\frac{2^{2m}-1}{3}$;
    \item[(b)] $A_2(2m+1,4;2)=\frac{2^{2m+1}-5}{3}$.
  \end{itemize}
\end{Corollary}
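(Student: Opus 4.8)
The plan is that this corollary is a direct specialization of the two structural results already quoted, namely Theorem~\ref{thm_spread} (existence of spreads) and Theorem~\ref{thm_almost_spread} (the exact value when $n\equiv 1\pmod k$), both evaluated at $q=2$ and $k=2$; no new argument is needed, so the ``proof'' is really just two substitutions plus an arithmetic simplification.

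For part (a) I would argue as follows. Here $n=2m$ is divisible by $k=2$, so Theorem~\ref{thm_spread} guarantees that $\mathbb{F}_2^{2m}$ contains a $2$-spread. By definition a $2$-spread meets the trivial counting bound $A_q(n,2k;k)\le\frac{q^n-1}{q^k-1}$ with equality, hence
$$
  A_2(2m,4;2)=\frac{2^{2m}-1}{2^2-1}=\frac{2^{2m}-1}{3}.
$$

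For part (b) I would invoke Theorem~\ref{thm_almost_spread}: since $n=2m+1$ satisfies $n\equiv 1\pmod 2$, that theorem applies with $q=2$, $k=2$ and yields $A_2(2m+1,4;2)=\frac{q^n-q}{q^k-1}-q+1$. Substituting $q=2$ gives $\frac{2^{2m+1}-2}{3}-1=\frac{2^{2m+1}-2-3}{3}=\frac{2^{2m+1}-5}{3}$, which is the claimed formula.

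There is no real obstacle here; the only point requiring any care is verifying that the three equivalent closed forms in Theorem~\ref{thm_almost_spread} collapse to the stated value after the substitution, and checking that the hypothesis $m\ge 2$ (equivalently $n\ge 4$, $n\ge 5$) lies in the range $1\le k\le n$ assumed in both theorems. I would state the computation explicitly in the two displayed equations above and leave it at that.
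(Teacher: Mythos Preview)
Your proposal is correct and matches the paper's approach exactly: the paper does not give a separate proof but simply states that the corollary follows from Theorem~\ref{thm_spread} and Theorem~\ref{thm_almost_spread}, which is precisely the specialization to $q=2$, $k=2$ that you spell out.
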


Using the results of Theorem~\ref{thm_spread}, Theorem~\ref{thm_almost_spread}, and Theorem~\ref{thm_partial_spread_4} 
the case $k=3$ was completely settled in \cite{spreadsk3}:
\begin{Theorem}
  \label{thm_spread_k_3}
  For each integer $m\ge 2$ we have
  \begin{itemize}
    \item[(a)] $A_2(3m,6;3)=\frac{2^{3m}-1}{7}$;
    \item[(b)] $A_2(3m+1,6;3)=\frac{2^{3m+1}-9}{7}$;
    \item[(c)] $A_2(3m+2,6;3)=\frac{2^{3m+2}-18}{7}$.
  \end{itemize}
\end{Theorem}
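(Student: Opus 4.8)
The plan is to dispose of the three residue classes of $n\bmod 3$ one at a time; parts~(a) and~(b) are one-line specializations of results already available, and all the substance is in~(c). For~(a) we have $3\mid 3m$, so Theorem~\ref{thm_spread} furnishes a $3$-spread in $\F{3m}{2}$, of size $\frac{2^{3m}-1}{2^{3}-1}=\frac{2^{3m}-1}{7}$, and a partial spread can never exceed a spread. For~(b) we have $3m+1\equiv 1\pmod 3$, and Theorem~\ref{thm_almost_spread} at $q=2$, $k=3$, $n=3m+1$ gives $A_2(3m+1,6;3)=\frac{2^{3m+1}-2^{4}+2^{3}-1}{2^{3}-1}=\frac{2^{3m+1}-9}{7}$.

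For the upper bound in~(c) I specialize Theorem~\ref{thm_partial_spread_4} to $q=2$, $k=3$, $r=2$ and $t+1=m$, so that $n=3(t+1)+r=3m+2$. Then $2\theta=\sqrt{1+4\cdot 2^{3}(2^{3}-2^{2})}-(2\cdot 2^{3}-2\cdot 2^{2}+1)=\sqrt{129}-9$, and $121<129<169$ gives $1<\theta<2$, hence $\lfloor\theta\rfloor=1$ and
\[
 A_2(3m+2,6;3)\ \le\ 2^{2}\cdot\frac{2^{3m}-1}{2^{3}-1}-\lfloor\theta\rfloor-1\ =\ \frac{2^{3m+2}-4}{7}-2\ =\ \frac{2^{3m+2}-18}{7}.
\]

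For the matching construction I would proceed by induction on $m\ge 2$, raising the dimension by $3$ at each step, the governing identity being $\frac{2^{n+3}-18}{7}=2^{3}\cdot\frac{2^{n}-18}{7}+18$ for $n\equiv 2\pmod 3$. The base case $n=8$ asks for a partial $3$-spread of $34$ planes in $\F{8}{2}$, which is one more than the value $33$ coming from the generic multi-component construction and must be exhibited directly --- for instance as a union of two orbits of the order-$17$ subgroup of a Singer cycle on $\F{8}{2}$, or quoted from the literature. For the inductive step, write $\F{n+3}{2}=U\oplus W$ with $U\cong\F{n}{2}$ and $\dim W=3$, take a partial $3$-spread $\mathcal D$ in $U$ of size $\frac{2^{n}-18}{7}$ (so with a $17$-element hole set $H$), fix bases of $W$ and of each plane of $\mathcal D$, and choose a spread set $\{\varphi_a:a\in\mathbb{F}_{2^{3}}\}$ of eight $3\times 3$ matrices over $\mathbb{F}_2$ with $\varphi_0=0$ and $\varphi_a-\varphi_b$ invertible whenever $a\neq b$ (e.g.\ the multiplications of $\mathbb{F}_{2^{3}}$). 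Replacing each $D\in\mathcal D$ by its eight graph planes $\{u+\varphi_a(u):u\in D\}$ and adjoining $W$ yields, after the routine disjointness checks (every $\varphi_a$ with $a\neq 0$ being bijective), a partial $3$-spread of size $2^{3}|\mathcal D|+1$ whose hole set computes to $H+W$, of size $17\cdot 2^{3}=136$. One then completes the step by placing $17$ further pairwise disjoint planes inside $H+W$: this covers $119$ of those points, leaves a new $17$-element hole set, and brings the size up to $2^{3}|\mathcal D|+18=\frac{2^{n+3}-18}{7}$.

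The main obstacle is exactly this last placement, together with the base case. The graph lift only reaches $2^{3}|\mathcal D|+1$, a full $17$ planes below the target, so the induction goes through only if one carries along enough structural information about the $17$ holes of $\mathcal D$ to guarantee that $H+W$ always contains $17$ pairwise disjoint planes whose complement in $H+W$ is again a hole set of the prescribed shape. Pinning down that invariant, proving it is reproduced by the construction, and checking it for the sporadic $34$-plane configuration in $\F{8}{2}$ is where the real work lies; the surrounding counts and disjointness verifications are mechanical.
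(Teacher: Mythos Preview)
The paper does not actually prove Theorem~\ref{thm_spread_k_3}; it is quoted from \cite{spreadsk3}, with the surrounding text indicating only that parts~(a), (b) and the upper bound in~(c) fall out of Theorems~\ref{thm_spread}, \ref{thm_almost_spread}, and~\ref{thm_partial_spread_4}. Your treatment of those three pieces is correct and is precisely what the paper points to.

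The substance, as you recognise, is the \emph{lower} bound in~(c), and here your proposal is not a proof but an outline with an explicitly acknowledged hole. Your graph lift over a spread set is sound and yields $8|\mathcal D|+1$ pairwise disjoint planes with hole set $H\times W$ of size $136$; however, reaching $8|\mathcal D|+18$ requires placing $17$ further pairwise disjoint planes whose $119$ nonzero points lie inside that $136$-point hole set \emph{and} whose removal leaves a $17$-point residue of the same structural type. Neither the existence of such a packing for an arbitrary $17$-element hole set, nor the identification of an invariant that survives the step, nor the verification of that invariant for the sporadic $34$-plane configuration in $\F{8}{2}$, is supplied. Without this, the recursion only reproduces the generic multi-component value $\tfrac{2^{3m+2}-25}{7}$ of Observation~\ref{obs_multi_component}, one short of the target; the extra plane per step is exactly the content of \cite{spreadsk3}. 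So what you have written is a correct reduction of the problem to its genuinely hard core, together with a citation for the base case, but not a proof of~(c).
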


In our Theorem~\ref{thm_spread_exact_value_3} we completely settle the case $n\equiv 2\pmod k$ for $q=2$, $k\ge 4$, and
$n\ge 2k+2$.\footnote{As $A_q(k+2,2k;k)=1$ for $k\ge 2$, the assumption $n\ge 2k+2$ is no restriction. The case $k=3$ 
is covered by \cite{spreadsk3}, see Theorem~\ref{thm_spread_k_3}. For $k=1,2$ the remainder of $n$ is strictly smaller than $2$. 
So, in other words, the binary case $n\equiv 2\pmod k$ is completely resolved.} Using the results of Theorem~\ref{thm_spread}, 
Theorem~\ref{thm_almost_spread}, Theorem~\ref{thm_partial_spread_4}, Observation~\ref{obs_multi_component}, and 
Theorem~\ref{thm_spread_exact_value_3} we can state:  
\begin{Corollary}
  \label{cor_spread_k_4}
  For each integer $m\ge 2$ we have
  \begin{itemize}
    \item[(a)] $A_2(4m,8;4)=\frac{2^{4m}-1}{15}$;
    \item[(b)] $A_2(4m+1,8;4)=\frac{2^{4m+1}-17}{15}$;
    \item[(c)] $A_2(4m+2,8;4)=\frac{2^{4m+2}-49}{15}$;
    \item[(d)] $\frac{2^{4m+3}-113}{15}\le A_2(4m+3,8;4)\le\frac{2^{4m+3}-53}{15}$.
  \end{itemize}
\end{Corollary}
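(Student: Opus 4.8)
The plan is to obtain all four parts by specializing the general results assembled in Section~\ref{sec_known_results}, together with the main Theorem~\ref{thm_spread_exact_value_3} and the construction recorded in Observation~\ref{obs_multi_component}, to $q=2$ and $k=4$; no new idea is needed, only careful bookkeeping. Note first that $2^k-1=15$, and that for every $m\ge 2$ the hypotheses of the cited results are met: part (a) needs only $k\mid n$, part (b) needs $n\equiv 1\pmod k$ with $k\le n$, and parts (c) and (d) need $n\ge 2k+2=10$, which holds since $4m+2\ge 10$ and $4m+3\ge 11$.

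For part (a), $4\mid 4m$, so by Theorem~\ref{thm_spread} the space $\F{4m}{2}$ contains a $4$-spread, whence $A_2(4m,8;4)$ attains the trivial point-counting bound $\tfrac{2^{4m}-1}{2^4-1}=\tfrac{2^{4m}-1}{15}$. For part (b), $4m+1\equiv 1\pmod 4$, and Theorem~\ref{thm_almost_spread} with $q=2$, $k=4$ gives $A_2(4m+1,8;4)=\tfrac{2^{4m+1}-2}{15}-1=\tfrac{2^{4m+1}-17}{15}$.

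For part (c) I would write $4m+2$ in the form $n=k(t+1)+r$ with $k=4$, $r=2$, $t=m-1\ge 1$, so that $q^r\cdot\tfrac{q^{k(t+1)}-1}{q^k-1}=4\cdot\tfrac{2^{4m}-1}{15}=\tfrac{2^{4m+2}-4}{15}$; Theorem~\ref{thm_spread_exact_value_3}, applied with $r=2$, then yields the exact value $A_2(4m+2,8;4)=\tfrac{2^{4m+2}-4}{15}-(q^r-1)=\tfrac{2^{4m+2}-49}{15}$, the matching lower bound being realized by the construction of Observation~\ref{obs_multi_component}. For part (d), write $4m+3=k(t+1)+r$ with $k=4$, $r=3$, $t=m-1\ge 1$. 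The construction of Observation~\ref{obs_multi_component}, of size $q^r\cdot\tfrac{q^{k(t+1)}-1}{q^k-1}-(q^r-1)=8\cdot\tfrac{2^{4m}-1}{15}-7=\tfrac{2^{4m+3}-113}{15}$, supplies the lower bound; for the upper bound I would invoke Theorem~\ref{thm_partial_spread_4} with $q=2$, $k=4$, $r=3$, where $2\theta=\sqrt{1+4\cdot 16\cdot(16-8)}-(32-16+1)=\sqrt{513}-17$, and since $21^2=441\le 513<529=23^2$ one gets $4\le 2\theta<6$, hence $\lfloor\theta\rfloor=2$, so that $A_2(4m+3,8;4)\le 8\cdot\tfrac{2^{4m}-1}{15}-2-1=\tfrac{2^{4m+3}-53}{15}$.

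The statement being a bookkeeping corollary, there is no substantial obstacle. The only points requiring genuine care are matching the parameterization $n=k(t+1)+r$ correctly in parts (c) and (d) — the relation $t=m-1$ is precisely why the hypothesis $m\ge 2$, i.e.\ $n\ge 2k+2$, is imposed — and the elementary estimate $441\le 513<529$ that pins down $\lfloor\theta\rfloor=2$ in part (d). One may also note that the remaining gap in (d), namely $113-53=60=15\cdot 4$ in the numerator (so that the unknown defect $s$ lies in $\{3,\dots,7\}$), is exactly the $r=3$ instance of the conjectured sharp bound $s=q^r-1$; closing it would require proving the matching lower bound $s\ge 7$ by extending the method behind Theorem~\ref{thm_spread_exact_value_3} beyond $r=2$, the construction side already being settled by Observation~\ref{obs_multi_component}.
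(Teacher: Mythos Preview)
Your proposal is correct and follows exactly the approach the paper itself indicates: the corollary is stated without proof, merely as a consequence of Theorem~\ref{thm_spread}, Theorem~\ref{thm_almost_spread}, Theorem~\ref{thm_partial_spread_4}, Observation~\ref{obs_multi_component}, and Theorem~\ref{thm_spread_exact_value_3}, and your argument is simply a careful unpacking of those citations with the arithmetic filled in. The only redundancy is that in part~(c) you needn't separately invoke Observation~\ref{obs_multi_component} for the lower bound, since Theorem~\ref{thm_spread_exact_value_3} already asserts equality.
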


In \cite{etzion2013problems} Etzion listed 100 open problems on $q$-analogs in coding theory. Our main theorem resolves several of them: 
\begin{itemize}
  \item Research problem 45 asks for a characterization of parameter cases for which the construction in 
        Observation~\ref{obs_multi_component} matches the exact value of $A_q(n,2k;k)$. Assuming $q=2$ and $k\ge 4$, this is 
        the case for $n\equiv 2\pmod k$.
  \item Research problem 46 asks for improvements of the upper bound from Theorem~\ref{thm_partial_spread_4}, which are 
        achieved for the same parameters as specified above. The same is true for Research problem 47 asking for exact values.
  \item The special case of the determination of $A_2(n,8;4)$ in Research problem 49 is completely resolved for $n\equiv 2\pmod 4$, 
        see Corollary~\ref{cor_spread_k_4}.               
\end{itemize}

\section{Constructions and vector space partitions}
\label{sec_preparation}

\noindent
For matrices $A,B\in\mathbb{F}_q^{m\times n}$ the \emph{rank distance} is defined via $d_R(A,B):=\operatorname{rk}(A-B)$. It is indeed 
a metric, as observed in \cite{gabidulin1985theory}.

\begin{Theorem}(see \cite{gabidulin1985theory})
  \label{thm_MRD_size}
  Let $m,n\ge d$ be positive integers, $q$ a prime power, and $\mathcal{C}\subseteq \mathbb{F}_q^{m\times n}$ be a rank-metric 
  code with minimum rank distance $d$. Then, $|\mathcal{C}|\le q^{\max(n,m)\cdot (\min(n,m)-d+1)}$. 
  Codes attaining this upper bound are called maximum rank distance (MRD) codes. They exist for all (suitable) choices of parameters. 
\end{Theorem}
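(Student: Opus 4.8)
The plan is to prove the two assertions of the theorem — the cardinality bound and the existence of codes attaining it — by separate, classical arguments. For the bound, note first that $A\mapsto A^{\top}$ is a rank-distance isometry $\mathbb{F}_q^{m\times n}\to\mathbb{F}_q^{n\times m}$, so we may assume $n=\min(m,n)\le m=\max(m,n)$; the claim then reads $|\mathcal{C}|\le q^{m(n-d+1)}$. Consider the $\mathbb{F}_q$-linear puncturing map $\pi\colon\mathbb{F}_q^{m\times n}\to\mathbb{F}_q^{m\times(n-d+1)}$ deleting the last $d-1$ columns. If $A,B\in\mathcal{C}$ are distinct then $\operatorname{rk}(A-B)\ge d$, and since deleting $d-1$ columns decreases the rank by at most $d-1$, the matrix $\pi(A)-\pi(B)=\pi(A-B)$ has rank at least $d-(d-1)=1$, hence is nonzero. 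So $\pi|_{\mathcal{C}}$ is injective and $|\mathcal{C}|\le\left|\mathbb{F}_q^{m\times(n-d+1)}\right|=q^{m(n-d+1)}$, as asserted.

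For the existence half I would exhibit a Gabidulin code. Keeping $n\le m$, put $k:=n-d+1\in\{1,\dots,n\}$, fix $g_1,\dots,g_n\in\mathbb{F}_{q^m}$ that are linearly independent over $\mathbb{F}_q$ (possible since $[\mathbb{F}_{q^m}:\mathbb{F}_q]=m\ge n$), and let $V:=\langle g_1,\dots,g_n\rangle_{\mathbb{F}_q}$. To each linearized polynomial $f(x)=\sum_{i=0}^{k-1}a_i x^{q^i}$, $a_i\in\mathbb{F}_{q^m}$ — an $\mathbb{F}_q$-linear endomorphism of $\mathbb{F}_{q^m}$ — associate the vector $\left(f(g_1),\dots,f(g_n)\right)\in\mathbb{F}_{q^m}^n$ and then, expanding each coordinate over a fixed $\mathbb{F}_q$-basis of $\mathbb{F}_{q^m}$, the matrix in $\mathbb{F}_q^{m\times n}$ whose $j$-th column is $f(g_j)$; let $\mathcal{C}$ be the resulting set. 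Two facts, both consequences of the observation that a nonzero $f$ as above has at most $q^{k-1}$ roots, which moreover form an $\mathbb{F}_q$-subspace, finish the argument: (i) distinct coefficient tuples give distinct codewords, since an $f\ne 0$ vanishing on all $g_j$ would vanish on $V$, forcing $q^n=|V|\le q^{k-1}$, i.e.\ $n\le k-1$, contradicting $k\le n$; hence $|\mathcal{C}|=q^{mk}=q^{m(n-d+1)}$; (ii) every nonzero codeword has rank at least $d$, because its column space equals $f(V)$ and $\dim_{\mathbb{F}_q}f(V)=\dim V-\dim(\ker f\cap V)\ge n-(k-1)=d$, using $\dim\ker f\le k-1$. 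By $\mathbb{F}_q$-linearity of the construction, (ii) means $\mathcal{C}$ has minimum rank distance at least $d$; together with (i) and the bound just proved, $\mathcal{C}$ is an MRD code (with minimum distance exactly $d$).

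The routine part is the puncturing argument for the bound. The part deserving care is showing that the Gabidulin construction really attains rank distance $d$, which rests entirely on the root count for linearized polynomials and the rank–nullity bookkeeping for the restriction $f|_V$; I would isolate that root-count statement as a short lemma (or cite it), after which both (i) and (ii) are immediate. Of course, since Theorem~\ref{thm_MRD_size} is only used downstream as a black box, one may alternatively just cite \cite{gabidulin1985theory} for the existence part.
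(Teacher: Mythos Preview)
Your argument is correct on both counts: the puncturing map gives the Singleton-type bound cleanly (the rank drop by at most $d-1$ when deleting $d-1$ columns is exactly the point), and the Gabidulin construction with the root-count lemma for linearized polynomials establishes existence; the bookkeeping in (i) and (ii) is accurate.

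Note, however, that the paper does not prove this theorem at all --- it is stated with a reference to \cite{gabidulin1985theory} and used as a black box. So there is no ``paper's own proof'' to compare against; what you have written is precisely the classical Gabidulin argument that the citation points to. Your closing remark that one may simply cite \cite{gabidulin1985theory} is in fact exactly what the paper does.
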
  
If $m<d$ or $n<d$, then only $|\mathcal{C}|=1$ is possible, which may be summarized to the single upper bound 
$|\mathcal{C}|\le \left\lceil q^{\max(n,m)\cdot (\min(n,m)-d+1)}\right\rceil$. 
Using an $m\times m$ identity matrix as a prefix one obtains the so-called \emph{lifted MRD codes}.

\begin{Theorem}(see \cite{silva2008rank})
  For positive integers $k,d,n$ with $k\le n$, $d\le 2\min(k,n-k)$, and $d\equiv 0\pmod{2}$, the size of a lifted MRD code in $G_q(n,k)$ with 
  subspace distance $d$ is given by $$M(q,k,n,d):=q^{\max(k,n-k)\cdot(\min(k,n-k)-d/2+1)}.$$ If $d>2\min(k,n-k)$, then we have $M(q,k,n,d)=1$. 
\end{Theorem}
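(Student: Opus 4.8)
The plan is to make the lifting construction explicit and then transport the Singleton‑type bound of Theorem~\ref{thm_MRD_size} through it. Put $m:=k$ and $n':=n-k$, and start from a rank‑metric code $\mathcal{C}\subseteq\mathbb{F}_q^{k\times(n-k)}$ of minimum rank distance $\delta:=d/2$; note that $\delta$ is a positive integer because $d$ is even and $d\ge 2$, and that $\delta\le\min(k,n-k)$ precisely under the hypothesis $d\le 2\min(k,n-k)$. To every $A\in\mathcal{C}$ assign the subspace $U_A:=\langle\,[\,I_k\mid A\,]\,\rangle\in G_q(n,k)$ spanned by the rows of the $k\times n$ matrix $[\,I_k\mid A\,]$; this matrix has rank $k$, so $\dim U_A=k$, and since the first $k$ coordinates of a reduced generator matrix of $U_A$ recover $A$, the map $A\mapsto U_A$ is injective. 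Hence the lifted code $\{U_A:A\in\mathcal{C}\}$ has cardinality $|\mathcal{C}|$.

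Next I would compute the subspace distance between two lifts. Row operations give $U_A+U_B=\langle\,[\,I_k\mid A\,],\,[\,0\mid B-A\,]\,\rangle$, whence $\dim(U_A+U_B)=k+\operatorname{rk}(B-A)$ and therefore
$$
d_S(U_A,U_B)=2\dim(U_A+U_B)-\dim U_A-\dim U_B=2\operatorname{rk}(A-B).
$$
Consequently the minimum subspace distance of the lifted code equals $2\delta=d$. Choosing $\mathcal{C}$ to be MRD --- which is possible for all admissible parameters by Theorem~\ref{thm_MRD_size} --- its size is $q^{\max(n-k,k)\cdot(\min(n-k,k)-\delta+1)}$, and substituting $\delta=d/2$ together with $\max(n-k,k)=\max(k,n-k)$ and $\min(n-k,k)=\min(k,n-k)$ produces exactly $M(q,k,n,d)$; the inequality $\delta\le\min(k,n-k)$ ensures that the exponent is a nonnegative integer, so the construction is non‑degenerate and the claimed size is realized.

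It remains to treat the range $d>2\min(k,n-k)$, i.e.\ $\delta>\min(m,n')$. Here the remark following Theorem~\ref{thm_MRD_size} forces $|\mathcal{C}|=1$, so a lifted MRD code consists of a single $k$‑subspace and $M(q,k,n,d)=1$, as asserted. The only step that calls for any attention is the bookkeeping of $\max$ against $\min$ when translating between the $k\times(n-k)$ matrix picture and the formula --- made harmless by the transpose symmetry $\operatorname{rk}(A)=\operatorname{rk}(A^{\top})$ underlying Theorem~\ref{thm_MRD_size} --- but there is no genuine obstacle: the statement is a direct corollary of the rank‑metric bound via lifting.
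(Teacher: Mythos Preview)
Your argument is correct and is the standard proof of this fact: make the lifting $A\mapsto\langle[\,I_k\mid A\,]\rangle$ explicit, verify $d_S(U_A,U_B)=2\operatorname{rk}(A-B)$ via row reduction, and read off the cardinality from the MRD bound of Theorem~\ref{thm_MRD_size}. Note, however, that the paper does not supply its own proof of this theorem; it is stated with a reference to \cite{silva2008rank}, so there is nothing to compare your approach against beyond the remark that what you wrote is exactly the argument one would expect (and the one implicit in the cited source).
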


In \cite{etzion2009error} a generalization, the so-called multi-level construction,  was presented. To this end, let $1\le k\le n$ be integers and $v\in \mathbb{F}_2^n$ a
binary vector of weight $k$. By $\operatorname{EF}_q(v)$ we denote the set of all $k\times n$ matrices over $\mathbb{F}_2$ 
that are in row-reduced echelon form, i.e., the Gaussian algorithm had been applied, and the pivot columns coincide with the 
positions where $v$ has a $1$-entry.

\begin{Theorem} (see \cite{etzion2009error})
  \label{thm_echelon_ferrers}
  For integers $k,n,d$ with $1\le k\le n$ and $1\le d\le \min(k,n-k)$, let $\mathcal{B}$ be a binary constant weight code of length $n$, 
  weight $k$, and minimum Hamming distance $2d$. For each $b\in \mathcal{B}$ let $\mathcal{C}_b$ be a code in $\operatorname{EF}_q(b)$ 
  with minimum rank distance at least $d$. Then, $\cup_{b\in\mathcal{B}} \,\mathcal{C}_b$ is a constant dimension code
  of dimension $k$ having a subspace distance of at least $2d$.
\end{Theorem}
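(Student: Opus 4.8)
The plan is to bound the subspace distance between two distinct codewords of $\cup_{b\in\mathcal B}\mathcal C_b$ by splitting into two cases according to whether the two codewords have the same identifying vector or not. Throughout I identify a $k$-dimensional subspace with the unique matrix in reduced row echelon form whose row space it is, and for a subspace $U$ I write $v(U)\in\mathbb F_2^n$ for its \emph{identifying vector}, the characteristic vector of the pivot columns of that matrix; thus $\operatorname{EF}_q(b)$ is exactly the set of $k$-subspaces with $v(U)=b$. In particular every codeword of $\cup_{b\in\mathcal B}\mathcal C_b$ has dimension $k$, so the union is automatically a constant dimension code of dimension $k$, and it remains only to show that any two distinct codewords $U\ne V$ satisfy $d_S(U,V)\ge 2d$. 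I will also use the elementary identities $d_S(U,V)=2\dim(U+V)-2k=2\bigl(k-\dim(U\cap V)\bigr)$, valid since $\dim U=\dim V=k$.

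The key ingredient is the inequality $d_S(U,V)\ge d_H\bigl(v(U),v(V)\bigr)$ for any two $k$-subspaces $U,V$. To prove it I would first observe that the support of $v(Z)$, for an arbitrary subspace $Z\le\mathbb F_q^n$, equals the set of \emph{leading positions} $\bigl\{\min\{j:z_j\ne 0\}\;:\;0\ne z\in Z\bigr\}$: the inclusion ``$\supseteq$'' holds because each row of the echelon matrix has its leading nonzero entry in a pivot column, and ``$\subseteq$'' follows by expanding a nonzero $z\in Z$ in the echelon basis and checking, using that each pivot column of a reduced echelon matrix is a standard basis vector, that the leading position of $z$ is the pivot column attached to the first basis row occurring with nonzero coefficient. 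Since this description is visibly monotone under inclusion of subspaces, $\operatorname{supp}v(U)\cup\operatorname{supp}v(V)\subseteq\operatorname{supp}v(U+V)$, whence $\dim(U+V)\ge\bigl|\operatorname{supp}v(U)\cup\operatorname{supp}v(V)\bigr|=2k-\bigl|\operatorname{supp}v(U)\cap\operatorname{supp}v(V)\bigr|$. Rearranging with the identities above gives $\dim(U\cap V)\le\bigl|\operatorname{supp}v(U)\cap\operatorname{supp}v(V)\bigr|$ and hence $d_S(U,V)\ge 2k-2\bigl|\operatorname{supp}v(U)\cap\operatorname{supp}v(V)\bigr|=d_H\bigl(v(U),v(V)\bigr)$.

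Next I would treat the case of two codewords $U,V$ with the \emph{same} identifying vector $b$ (with support $S$), where the previous inequality is useless and the rank metric enters. Let $M_U,M_V$ be the corresponding echelon matrices; then $M_U-M_V$ is supported on the non-pivot columns $[n]\setminus S$. Moreover a vector of $U$ that vanishes on all pivot columns in $S$ must be zero, since its coordinates with respect to the echelon basis of $U$ can be read off from the pivot columns. Consequently $U$ meets the row space of $M_U-M_V$ trivially, and replacing the bottom block $M_V$ of the stacked matrix by $M_V-M_U$ through row operations yields $\dim(U+V)=\operatorname{rk}\!\left(\begin{smallmatrix}M_U\\ M_V\end{smallmatrix}\right)=\operatorname{rk}\!\left(\begin{smallmatrix}M_U\\ M_V-M_U\end{smallmatrix}\right)=k+\operatorname{rk}(M_U-M_V)$, i.e.\ $d_S(U,V)=2\,d_R(M_U,M_V)$.

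Combining the two cases finishes the proof: if the distinct codewords $U\ne V$ share their identifying vector $b$, then $d_S(U,V)=2\,d_R(M_U,M_V)\ge 2d$ because $\mathcal C_b$ has minimum rank distance at least $d$; if their identifying vectors differ, say $U\in\operatorname{EF}_q(b)$ and $V\in\operatorname{EF}_q(b')$ with $b\ne b'$, then $d_S(U,V)\ge d_H(b,b')\ge 2d$ because $\mathcal B$ has minimum Hamming distance $2d$. The hypotheses $1\le d\le\min(k,n-k)$ and the constant weight condition on $\mathcal B$ serve only to guarantee that the ingredients can be chosen non-trivially. I expect the main obstacle to be the identifying-vector inequality of the second paragraph, and within it the observation that the pivot support of a subspace is exactly its set of leading positions — once this is in hand, monotonicity and hence the distance bound are immediate, and the remaining steps are routine manipulations of echelon matrices.
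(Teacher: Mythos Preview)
The paper does not give its own proof of this theorem; it is quoted from \cite{etzion2009error} and used as a black box. Your argument is the standard one from that reference and is correct: split into the cases $v(U)=v(V)$ and $v(U)\ne v(V)$, use $d_S(U,V)=2\,d_R(M_U,M_V)$ in the first case and $d_S(U,V)\ge d_H(v(U),v(V))$ in the second. Both of your key computations are sound; in particular the block manipulation $\operatorname{rk}\!\left(\begin{smallmatrix}M_U\\ M_V\end{smallmatrix}\right)=k+\operatorname{rk}(M_U-M_V)$ via the observation that rows of $M_U-M_V$ vanish on the pivot columns is exactly the right way to see the first identity.

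One small presentational point: in the paragraph proving that $\operatorname{supp}v(Z)$ coincides with the set of leading positions, your labels ``$\supseteq$'' and ``$\subseteq$'' are attached to the opposite arguments. The sentence ``each row of the echelon matrix has its leading nonzero entry in a pivot column'' exhibits every pivot column as a leading position, i.e.\ shows $\operatorname{supp}v(Z)\subseteq\{\text{leading positions}\}$; the echelon-expansion argument shows the reverse containment. The mathematics is fine, only the tags are swapped.
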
  

The authors of \cite{etzion2009error} also came up with a conjecture for the size of an MRD code in $\operatorname{EF}_q(v)$, which is still unrebutted. 
Taking binary vectors with $k$ consecutive ones we are in the classical MRD case. So, taking binary vectors $v_i$, where the ones 
are located in positions $(i-1)k+1$ to $ik$, for all $1\le i\le \left\lfloor n/k\right\rfloor$, clearly gives 
a binary constant weight code of length $n$, weight $k$, and minimum Hamming distance $2k$.  

\begin{Observation}
  \label{obs_multi_component} 
  For positive integers $k$, $n$ with $n>2k$ and $n\not\equiv 0\pmod{k}$, there exists a constant dimension code in $G_q(n,k)$ 
  with subspace distance $2k$ having cardinality\footnote{Using our general notation, we
    may rewrite the stated formula with $n=k(t+1)+r$ and $n\,\operatorname{mod}\,k=r$.}
  $$
    1+\sum_{i=1}^{\left\lfloor n/k\right\rfloor-1} q^{n-ik}
    =1+q^{k+(n\,\operatorname{mod}\,k)}\cdot\frac{q^{n-k-(n\,\operatorname{mod}\,k)}-1}{q^k-1}
    =\frac{q^n-q^{k+(n\,\operatorname{mod}\,k)}+q^k-1}{q^k-1}.
  $$ 
\end{Observation}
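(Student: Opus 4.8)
The plan is to apply the multi-level (Echelon--Ferrers) construction of Theorem~\ref{thm_echelon_ferrers} to the explicit ingredients described just before the statement. Concretely, take $d=k$ and let $\mathcal{B}=\{v_1,\dots,v_{\lfloor n/k\rfloor}\}$, where $v_i\in\mathbb{F}_2^n$ carries its $k$ ones in positions $(i-1)k+1,\dots,ik$. Then $\mathcal{B}$ has length $n$, constant weight $k$, and minimum Hamming distance $2k=2d$, while the hypothesis $1\le d\le\min(k,n-k)$ of Theorem~\ref{thm_echelon_ferrers} reduces to $k\le n-k$, which holds because $n>2k$. Hence, once a block code $\mathcal{C}_{v_i}\subseteq\operatorname{EF}_q(v_i)$ of minimum rank distance at least $k$ is picked for each $i$, the union $\bigcup_i\mathcal{C}_{v_i}$ is a constant dimension code in $G_q(n,k)$ of subspace distance at least $2k$, and its cardinality is $\sum_{i=1}^{\lfloor n/k\rfloor}|\mathcal{C}_{v_i}|$: distinct pivot patterns force subspace distance $\ge 2k>0$, hence distinct codewords, and distinct row-reduced echelon matrices within one block span distinct subspaces.

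The next step is to make each $\mathcal{C}_{v_i}$ as large as possible. First I would record the shape of a matrix in $\operatorname{EF}_q(v_i)$: since the leftmost pivot lies in column $(i-1)k+1$, all entries in columns $1,\dots,(i-1)k$ vanish, columns $(i-1)k+1,\dots,ik$ form the identity block $I_k$, and the trailing $k\times(n-ik)$ block is free. Therefore, for two elements of $\operatorname{EF}_q(v_i)$ the rank distance $d_R$ equals the rank of the difference of their trailing blocks, so choosing $\mathcal{C}_{v_i}$ amounts to choosing a rank-metric code in $\mathbb{F}_q^{k\times(n-ik)}$ of minimum rank distance $k$; by Theorem~\ref{thm_MRD_size} a largest such code has size $\big\lceil q^{\max(k,n-ik)\,(\min(k,n-ik)-k+1)}\big\rceil$. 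For $1\le i\le\lfloor n/k\rfloor-1$ one has $n-ik\ge k+(n\bmod k)>k$, so this size equals $q^{n-ik}$ (the block being a lifted MRD code). For $i=\lfloor n/k\rfloor$ one has $n-ik=n\bmod k<k$, a $k\times(n\bmod k)$ matrix cannot have rank $k$, the exponent is non-positive, and the size is $1$; indeed the single matrix $\big[\,0\,\mid\,I_k\,\mid\,0\,\big]$ works.

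Combining the two steps, the construction has cardinality $1+\sum_{i=1}^{\lfloor n/k\rfloor-1}q^{n-ik}$, and it only remains to verify the two displayed closed forms. Writing $n=k(t+1)+r$ with $t+1=\lfloor n/k\rfloor$ and $r=n\bmod k$, the substitution $m=t+1-i$ turns the sum into $q^{r}\sum_{m=1}^{t}q^{km}=q^{k+r}\cdot\frac{q^{kt}-1}{q^k-1}=q^{k+r}\cdot\frac{q^{n-k-r}-1}{q^k-1}$, and adding $1$ and clearing the denominator yields $\frac{q^n-q^{k+r}+q^k-1}{q^k-1}$. I expect the only delicate point to be the bookkeeping for the echelon shape of $\operatorname{EF}_q(v_i)$ — in particular, noticing that the last block contributes just one subspace because its free part has fewer than $k$ columns; everything else is a routine geometric-series computation. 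As a side remark, since subspace distance $2k$ for $k$-spaces is equivalent to pairwise trivial intersection, this construction is itself a partial $k$-spread, which is why it yields the value $s=q^{r}-1$ referred to in Section~\ref{sec_known_results}.
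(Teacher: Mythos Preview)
Your proposal is correct and follows exactly the approach the paper intends: the paper sets up the pivot vectors $v_i$ and the application of Theorem~\ref{thm_echelon_ferrers} in the paragraph immediately preceding the Observation, leaving the cardinality computation implicit, and you have filled in precisely those details (the free $k\times(n-ik)$ tail in $\operatorname{EF}_q(v_i)$, the MRD sizes $q^{n-ik}$ for $i<\lfloor n/k\rfloor$ and $1$ for the last block, and the geometric-series identity). There is nothing to add or correct.
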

We remark that a more general construction, among similar lines and including explicit formulas for the respective cardinalities, has 
been presented in \cite{skachek2010recursive}.

\bigskip
\bigskip


A \emph{vector space partition} $\mathcal{P}$ of $\F{n}{q}$ is a collection of subspaces with the property that every nonzero vector  
of $\F{n}{q}$ is contained in a unique member of $\mathcal{P}$. If for $d\in\{1,2,\dots,k\}$ the  vector space 
partition $\mathcal{P}$  contains $m_d$ subspaces of dimension $d$ and $m_k>0$, then $(m_k,m_{k-1},\dots,m_1)$ is called the 
\emph{type} of $\mathcal{P}$. We will also use the notation $k^{m_k}\dots 1^{m_1}$, where we may leave out cases with $m_d=0$. 
The \emph{tail} of $\mathcal{P}$ is the set of subspaces, in $\mathcal{P}$, having the smallest dimension. If the dimension of the 
corresponding subspaces is given by $d$, then the \emph{length} of the tail is the number $m_d$, i.e., the cardinality of the tail.

\begin{Theorem} (Theorem 1 in \cite{heden2009length})
  \label{thm_length_of_tail}
  Let $\mathcal{P}$ be a vector space partition of $\F{n}{q}$, let $n_1$ denote the length of the tail of $\mathcal{P}$, let 
  $d_1$ denote the dimension of the vector spaces in the tail of $\mathcal{P}$, and let $d_2$ denote the dimension of the 
  vector spaces of the second lowest dimension.
  \begin{enumerate}
    \item[(i)]   if $q^{d_2-d_1}$ does not divide $n_1$ and if $d_2<2d_1$, then $n_1\ge q^{d_1}+1$;
    \item[(ii)]  if $q^{d_2-d_1}$ does not divide $n_1$ and if $d_2\ge 2d_1$, then either $d_1$ divides $d_2$ and 
                 $n_1=\left(q^{d_2}-1\right)/\left(q^{d_1}-1\right)$ or $n_1>2q^{d_2-d_1}$;
    \item[(iii)] if $q^{d_2-d_1}$ divides $n_1$ and $d_2<2d_1$, then $n_1\ge q^{d_2}-q^{d_1}+q^{d_2-d_1}$;
    \item[(iv)]  if $q^{d_2-d_1}$ divides $n_1$ and $d_2\ge 2d_1$, then $n_1\ge q^{d_2}$.
  \end{enumerate}   
\end{Theorem}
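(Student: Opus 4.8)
The plan is to reduce the global statement to a configuration living inside a subspace of bounded dimension, and then to settle that small configuration by an elementary counting argument. Write $V=\F{n}{q}$, let $\mathcal{T}\subseteq\mathcal{P}$ be the tail (so $|\mathcal{T}|=n_1$ and every $T\in\mathcal{T}$ has $\dim T=d_1$), and fix once and for all a member $F\in\mathcal{P}$ with $\dim F=d_2$, which exists because $d_2$ is the second-smallest occurring dimension. The basic tool is that restriction preserves the partition property: for any subspace $W\le V$ the collection $\mathcal{P}|_W:=\{U\cap W\,:\,U\in\mathcal{P},\ U\cap W\neq 0\}$ is again a vector space partition, now of $W$, since every nonzero vector of $W$ still lies in a unique member, and distinct members of $\mathcal{P}$ produce distinct (nonzero) members of $\mathcal{P}|_W$. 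First I would record the dimension bookkeeping with $T_0\in\mathcal{T}$ a fixed tail member and $W:=T_0+F$: here $\dim W=d_1+d_2$ because $T_0\cap F=0$, and for every $U\in\mathcal{P}\setminus\{T_0,F\}$ one has $(U\cap W)\cap T_0=U\cap T_0=0$ and $(U\cap W)\cap F=U\cap F=0$, whence $\dim(U\cap W)\le\min\{d_2,d_1\}=d_1$. Thus $\mathcal{P}|_W$ is a vector space partition of $\F{d_1+d_2}{q}$ with a single member of dimension $>d_1$, namely $F$, and all remaining members of dimension at most $d_1$.

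Next I would apply the elementary point-counting identity $(q^{d_2}-1)+\sum_{j\le d_1}N_j(q^j-1)=q^{d_1+d_2}-1$ to $\mathcal{P}|_W$, where $N_j$ is the number of its members of dimension $j$. This forces $\sum_{j\le d_1}N_j(q^j-1)=q^{d_2}(q^{d_1}-1)$, so $N_{d_1}=q^{d_2}$ exactly when no member of smaller dimension occurs and $N_{d_1}<q^{d_2}$ otherwise; in all cases these $d_1$-dimensional members are pairwise trivially intersecting and each meets $F$ trivially, so together with the smaller members they form a partial $d_1$-spread together with some lower-dimensional leftover on $\F{d_1+d_2}{q}$. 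Each of these $d_1$-dimensional members is of one of two kinds: a genuine tail member $T\in\mathcal{T}$ contained in $W$, which contributes to $n_1$, or a section $U\cap W$ of some member $U$ with $\dim U\ge d_2$. The crux is to bound the number of sections of the second kind and thereby force the first kind to be numerous; here the hypothesis $q^{d_2-d_1}\mid n_1$, or its negation, enters through a congruence on how the tail meets $W$ — equivalently, on the structure induced modulo $F$, where $W/F\cong\F{d_1}{q}$ and every $d_1$-dimensional member projects onto the whole quotient. When $q^{d_2-d_1}\mid n_1$ the tail decomposes into full fibre classes, which pushes the count up to $q^{d_2}$ in the range $d_2\ge 2d_1$ (case (iv)) and to $q^{d_2}-q^{d_1}+q^{d_2-d_1}$ in the range $d_2<2d_1$ (case (iii)); when the divisibility fails, the only way to avoid $n_1>2q^{d_2-d_1}$ (resp. $n_1\ge q^{d_1}+1$) is that the sections of the second kind are absent and the tail members inside $W$ exhaust a genuine $d_1$-spread of a $d_2$-dimensional subspace, which by Theorem~\ref{thm_spread} requires $d_1\mid d_2$ and gives $n_1=\gaussm{d_2}{1}{q}/\gaussm{d_1}{1}{q}=(q^{d_2}-1)/(q^{d_1}-1)$, the exceptional alternative in case (ii).

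Finally I would organize the write-up into the four cases of the statement exactly as they are cut by the two dichotomies ($q^{d_2-d_1}\mid n_1$ or not; $d_2<2d_1$ or $d_2\ge 2d_1$), in each case translating the structural conclusion of the previous step into the stated inequality and checking the extremal configurations — a $d_1$-spread of a $d_2$-subspace, and a near-spread — to confirm the bounds are sharp. The main obstacle is precisely the middle step: cleanly separating true tail members from $d_1$-dimensional sections of larger members inside $W$, and extracting from the divisibility hypothesis the exact congruence that either forbids such sections, forcing the spread exception, or forces the tail to be a union of $q^{d_2-d_1}$-sized classes, forcing the large lower bounds. Everything else is dimension bookkeeping together with the point-counting identity for vector space partitions.
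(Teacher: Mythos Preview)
First, note that the paper does not prove Theorem~\ref{thm_length_of_tail} at all: it is quoted from \cite{heden2009length} and used as a black box, so there is no ``paper's own proof'' against which to compare your proposal.

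On its own merits, your reduction to $W=T_0+F$ is sound, and the point count $\sum_{j\le d_1}N_j(q^j-1)=q^{d_2}(q^{d_1}-1)$ is correct. The genuine gap is exactly where you place the ``crux'', and you do not close it. The $d_1$-dimensional blocks of $\mathcal{P}|_W$ are of two sorts --- tail members $T\subseteq W$ and sections $U\cap W$ with $\dim U\ge d_2$ --- but nothing in your setup bounds the number of sections or forces many tail members to land inside this one particular $W$; a priori $T_0$ could be the \emph{only} tail member contained in $W$, with every other $d_1$-dimensional block of $\mathcal{P}|_W$ arising as a slice of some large $U$. More seriously, the hypothesis $q^{d_2-d_1}\mid n_1$ is a condition on the global count $n_1$ and carries no evident information about the local picture inside a single $W$: your assertion that ``the tail decomposes into full fibre classes'' is not justified, and the claimed link between failure of divisibility and ``sections of the second kind being absent'' is simply not argued. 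Heden's proof in \cite{heden2009length} is global rather than local: it studies, over all hyperplanes $H$, the number $b_H$ of tail members contained in $H$, extracts a congruence on $b_H$ modulo $q^{d_2-d_1}$ from the point partition restricted to $H$, and then combines these congruences with double counting over the hyperplanes to force the four inequalities. That averaging over hyperplanes is what ties the arithmetic of $n_1$ to the geometry of the tail, and your single-$W$ localization does not supply a substitute for it.
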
   
So, in any (nontrivial) case\footnote{We have to exclude the trivial subspace partition $\mathcal{P}=\left\{\F{n}{q}\right\}$, where 
$d_1=n$ and $d_2$ does not exist.}, we have $n_1\ge q+1\ge 3$, which will be sufficient in many situations.

\section{Main theorem}
\label{sec_main_result}

For a vector space partition $\mathcal{P}$ of $\F{n}{q}$ and a hyperplane $H$, let 
$\mathcal{P}_H:=\{U\cap H\,:\,U\in\mathcal{P}\}$ be the vector space partition of $\F{n-1}{q}$, i.e., $\mathcal{P}_H$ is 
obtained from $\mathcal{P}$ by the intersection with hyperplane $H$. 

\noindent
\begin{Lemma}
  \label{lemma_forbidden_type_6}
  For two integers $t\ge 1$ and $k\ge 4$,  no vector space partition of type $k^{n_k} (k-1)^{n_{k-1}} 1^{1+2^{k-1}}$ exists in 
  $\F{k(t+1)+1}{2}$, where $n_k=\frac{2^{kt+2}+2^{k}-5}{2^k-1}$ and $n_{k-1}=2^{kt+2}-3$.\footnote{Theorem~\ref{thm_length_of_tail}.(ii,iv) 
  yields $n_1= 2^{k-1}-1$ or $n_1>2^{k-1}$, if we set $d_2=k-1$ and $d_1=1$. The improvement of Theorem~\ref{thm_length_of_tail}, i.e.\ see 
   \cite[Theorem~2]{heden2013supertail}, is not sufficient to exclude the case of Lemma~\ref{lemma_forbidden_type_6}.} 
\end{Lemma}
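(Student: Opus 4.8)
The plan is to assume such a vector space partition $\mathcal{P}$ exists and to derive a contradiction by intersecting $\mathcal{P}$ with every hyperplane of $V:=\F{k(t+1)+1}{2}$, where the crucial point will be a modular obstruction on the lengths of the resulting tails. For a hyperplane $H$ the induced partition $\mathcal{P}_H$ is a vector space partition of $H\cong\F{k(t+1)}{2}$, and since each member of $\mathcal{P}$ either lies in $H$ or meets $H$ in a subspace of one smaller dimension, the components of $\mathcal{P}_H$ have dimensions in $\{k,k-1,k-2,1\}$. Writing $a_H$, $x_H$, $y_H$ for the numbers of members of $\mathcal{P}$ of dimension $k$, $k-1$, $1$ respectively that are contained in $H$, the partition $\mathcal{P}_H$ has $a_H$ components of dimension $k$, $(n_k-a_H)+x_H$ of dimension $k-1$, $n_{k-1}-x_H$ of dimension $k-2$, and $y_H$ of dimension $1$. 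I would record the standard incidence count $\sum_H y_H=n_1\cdot\big(2^{k(t+1)}-1\big)$, obtained by double counting pairs $(p,H)$ with $p$ a $1$-dimensional member of $\mathcal{P}$ contained in $H$, since each such $p$ lies in $2^{k(t+1)}-1$ hyperplanes.

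The key step is the local vector count in $H$. Counting nonzero vectors through the components of $\mathcal{P}_H$ gives
\[
 a_H(2^k-1)+\big((n_k-a_H)+x_H\big)(2^{k-1}-1)+(n_{k-1}-x_H)(2^{k-2}-1)+y_H=2^{k(t+1)}-1 ,
\]
which simplifies to $a_H\cdot 2^{k-1}+x_H\cdot 2^{k-2}+y_H=C$ with $C:=2^{k(t+1)}-1-n_k(2^{k-1}-1)-n_{k-1}(2^{k-2}-1)$ \emph{independent of $H$}. Substituting the prescribed values of $n_k$ and $n_{k-1}$ one finds $C=2^{kt+1}+2\cdot\frac{2^{kt}-1}{2^k-1}+2^{k-2}-1$, and a short computation shows $C\equiv 1\pmod{2^{k-2}}$. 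Since $2^{k-1}$ and $2^{k-2}$ are both divisible by $2^{k-2}$, this forces $y_H\equiv 1\pmod{2^{k-2}}$ for every hyperplane $H$; in particular $y_H\neq 0$.

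Because $y_H\ge 1$, the partition $\mathcal{P}_H$ has a $1$-dimensional component, and as $k\ge 4$ all its other components have dimension at least $k-2\ge 2$, so the tail of $\mathcal{P}_H$ is precisely the set of those $y_H$ points; moreover $\mathcal{P}_H$ is non-trivial, since one point cannot cover $\F{k(t+1)}{2}$. Hence Theorem~\ref{thm_length_of_tail} — in the form that the tail of a non-trivial vector space partition has length at least $q+1=3$ — gives $y_H\ge 3$, and together with $y_H\equiv 1\pmod{2^{k-2}}$ and $2^{k-2}\ge 4$ this improves to $y_H\ge 2^{k-2}+1$ for every one of the $2^{k(t+1)+1}-1$ hyperplanes. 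Summing, $\sum_H y_H\ge (2^{k-2}+1)\big(2^{k(t+1)+1}-1\big)$, while the double count gives $\sum_H y_H=(2^{k-1}+1)\big(2^{k(t+1)}-1\big)$; an elementary estimate ($\,(2^{k-2}+1)(2^{k(t+1)+1}-1)-(2^{k-1}+1)(2^{k(t+1)}-1)=2^{k(t+1)}+2^{k-2}>0\,$) shows the first quantity strictly exceeds the second, a contradiction.

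The main obstacle — really the only non-routine point — is spotting the congruence $y_H\equiv 1\pmod{2^{k-2}}$ hidden in the local vector count and then verifying that it survives after subtracting the minimal tail length $3$ supplied by Theorem~\ref{thm_length_of_tail}, so that $y_H$ is forced to be as large as $2^{k-2}+1$ on \emph{every} hyperplane; once that is in place the counting inequality closes the argument. Secondary care points, all easy here, are checking that $\mathcal{P}_H$ is always a non-trivial vector space partition (so the tail bound applies) and that the dimension $k-2$ exceeds $1$, which is exactly where the hypothesis $k\ge 4$ enters.
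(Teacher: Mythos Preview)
Your argument is correct and follows essentially the same route as the paper's proof: intersect with every hyperplane, extract the congruence $y_H\equiv 1\pmod{2^{k-2}}$ from the point count, use Theorem~\ref{thm_length_of_tail} to rule out $y_H=1$ and hence force $y_H\ge 2^{k-2}+1$, and then double count incidences of holes with hyperplanes to reach a contradiction. The only cosmetic difference is that the paper additionally invokes the trivial upper bound $y_H\le 1+2^{k-1}$ to pin $y_H$ down to the two values $1+2^{k-2}$ and $1+2^{k-1}$ before averaging, whereas you work directly with the uniform lower bound---both versions yield the same inequality $(2^{k-2}+1)(2^{k(t+1)+1}-1)>(2^{k-1}+1)(2^{k(t+1)}-1)$.
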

\begin{proof}
  Assume the existence of a vector space partition $\mathcal{P}$ of the specified type. Let $H$ be an arbitrary hyperplane.  
  Since the $m=\frac{2^{k(t+1)+2}-2^{k+1}-2}{2^k-1}$ 
  non-holes of $\mathcal{P}_H$  have dimensions in $\{k,k-1,k-2\}$ and the total number of points is given by $\gaussm{k(t+1)}{1}{2}=2^{k(t+1)}-1$, 
  the number of holes $L_H$ has to satisfy $L_H\equiv 1\pmod {2^{k-2}}$. Using $L_H\le 1+2^{k-1}$, we conclude 
  $L_H\in \{1,1+2^{k-2},1+2^{k-1}\}$. Due to the tail condition in Theorem~\ref{thm_length_of_tail}, the case $L_H=1$ is impossible. 
  Now let $x$ be the number of hyperplanes with $L_H=1+2^{k-1}$ holes and $\gaussm{k(t+1)+1}{k(t+1)}{2}-x=2^{k(t+1)+1}-1-x$ the number 
  of hyperplanes with $L_H=1+2^{k-2}$ holes. Since each hole is contained in $\gaussm{k(t+1)}{k(t+1)-1}{2}=2^{k(t+1)}-1$ hyperplanes, we have
  \begin{eqnarray*}
    &&\frac{\left(1+2^{k-1}\right)x+\left(1+2^{k-2}\right)\cdot (2^{k(t+1)+1}-1-x)}{2^{k(t+1)}-1}\\
    &=&\frac{\left(1+2^{k-2}\right)\cdot 2^{k(t+1)+1}-\left(1+2^{k-2}\right)+2^{k-2}\cdot x}{2^{k(t+1)}-1}\\
    &\ge& \frac{\left(1+2^{k-2}\right)\cdot 2^{k(t+1)+1}-\left(1+2^{k-2}\right)}{2^{k(t+1)}-1}\\
    &>& 2\cdot \left(1+2^{k-2}\right)=2^{k-1}+2>1+2^{k-1}   
  \end{eqnarray*}
  holes in total, a contradiction.
\end{proof}

\begin{Lemma}
  \label{lemma_special_theta_bound}
  Using the notation from Theorem~\ref{thm_partial_spread_4}, we have $\left\lfloor\theta\right\rfloor
  =\left\lfloor\frac{q^r-2}{2}\right\rfloor$ for $r\ge 1$ and $k\ge 2r$.\footnote{The result is also valid 
  for $k=2r-1$, $r\ge 2$, and $q\in\{2,3\}$.} 
\end{Lemma}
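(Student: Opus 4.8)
The plan is to evaluate $\theta$ directly from its defining quadratic. Recall that $2\theta=\sqrt{1+4q^k(q^k-q^r)}-(2q^k-2q^r+1)$, so $\theta$ is the positive root of $\theta^2+(2q^k-2q^r+1)\theta-q^k(q^k-q^r)=0$, equivalently $\theta^2+\theta=q^k(q^k-q^r)-2(q^k-q^r)\theta=(q^k-q^r)(q^k-2\theta)$. The idea is to guess that $\theta$ is close to $\tfrac{q^r-2}{2}$ and pin down $\lfloor\theta\rfloor$ by a squeeze: show $q^{k-1}-1\le$ (something) and more precisely that $\tfrac{q^r}{2}-1\le\theta<\tfrac{q^r}{2}$ whenever $k\ge 2r$, from which $\lfloor\theta\rfloor=\lfloor\tfrac{q^r-2}{2}\rfloor$ follows (this last equality splitting into the cases $q$ even, where $\tfrac{q^r-2}{2}$ is already an integer, and $q$ odd, where $\lfloor\tfrac{q^r-2}{2}\rfloor=\tfrac{q^r-3}{2}$ and one needs $\theta\ge\tfrac{q^r-3}{2}$, i.e.\ the slightly stronger lower bound $\theta>\tfrac{q^r-3}{2}$).

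First I would establish the upper bound $\theta<\tfrac{q^r}{2}$. Since $2\theta<q^r$ is equivalent to $\sqrt{1+4q^k(q^k-q^r)}<2q^k-q^r+1$, and both sides are positive, I can square: the claim becomes $1+4q^{2k}-4q^{k+r}<4q^{2k}+q^{2r}+1-4q^{k+r}+4q^k-2q^r$, i.e.\ $0<q^{2r}+4q^k-2q^r$, which is clear since $q^{2r}\ge 2q^r$ for $q\ge 2$, $r\ge 1$ (indeed $q^r\ge 2$). Note this direction does not even use $k\ge 2r$.

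Next I would establish the lower bound. For the even-$q$ case it suffices to show $2\theta\ge q^r-2$, i.e.\ $\sqrt{1+4q^k(q^k-q^r)}\ge 2q^k-q^r-1$; assuming the right side is nonnegative (true for $k\ge 2r$ and in fact much more weakly), squaring reduces this to $1+4q^{2k}-4q^{k+r}\ge 4q^{2k}+q^{2r}+1-4q^{k+r}-4q^k+2q^r$, i.e.\ $4q^k\ge q^{2r}+2q^r$, i.e.\ $4q^k\ge q^r(q^r+2)$. Here is where $k\ge 2r$ enters: then $4q^k\ge 4q^{2r}\ge q^r(q^r+2)$ since $4q^r\ge q^r+2$ for $q\ge 2$, $r\ge 1$. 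For the odd-$q$ case I would instead show $2\theta>q^r-3$ by the analogous squaring argument, which turns into $4q^k>q^{2r}-2q^r\cdot 3+\dots$; more simply, since for odd $q\ge 3$ one has $q^r$ odd so $q^r-3$ is even and the target $\lfloor\theta\rfloor=\tfrac{q^r-3}{2}$ needs $\theta\ge\tfrac{q^r-3}{2}$, and this is an immediate consequence of the already-proven $2\theta\ge q^r-2>q^r-3$. So in fact the single bound $q^r-2\le 2\theta<q^r$ handles both parities uniformly, and $\lfloor\theta\rfloor=\big\lfloor\tfrac{q^r-2}{2}\big\rfloor$ drops out.

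The main obstacle is purely bookkeeping: making sure the squaring steps are valid, i.e.\ that the quantities being squared are nonnegative (for the lower bound one needs $2q^k-q^r-1\ge 0$, which under $k\ge 2r$ is trivial), and correctly tracking that $\lfloor\tfrac{q^r-2}{2}\rfloor$ equals $\tfrac{q^r-2}{2}$ for even $q$ and $\tfrac{q^r-3}{2}$ for odd $q$, so that the common window $[\,q^r-2,\,q^r)$ for $2\theta$ suffices in both cases. I would also double-check the footnote's extra range $k=2r-1$, $r\ge 2$, $q\in\{2,3\}$: there the key inequality $4q^k\ge q^r(q^r+2)$ becomes $4q^{2r-1}\ge q^{2r}+2q^r$, i.e.\ $4\ge q+2q^{1-r}$, which holds exactly for $q=2$ (giving $4\ge 2+2^{2-r}$, true for $r\ge 1$, and we want $r\ge 2$) and for $q=3$ only when $2\cdot 3^{1-r}\le 1$, i.e.\ $r\ge 2$ — matching the stated hypotheses, with the strictness then needing a touch of care but following from the same computation since the inequality $q^{2r}\ge 2q^r$ is strict.
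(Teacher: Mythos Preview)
Your approach---sandwiching $2\theta$ via squaring comparisons---is exactly what the paper does, and your lower bound $2\theta>q^r-2$ (via the key inequality $4q^k\ge q^{2r}+2q^r$ for $k\ge 2r$) matches the paper's. However, there is a real gap in your upper bound for odd $q$. You claim the single window $q^r-2\le 2\theta<q^r$ pins down $\lfloor\theta\rfloor$ in both parities, but for odd $q$ it does not: with $q^r$ odd, the interval $\big[(q^r-2)/2,\,q^r/2\big)$ for $\theta$ contains the integer $(q^r-1)/2$, so from your bounds $\lfloor\theta\rfloor$ could be either $(q^r-3)/2$ or $(q^r-1)/2$. Your lower bound correctly gives $\lfloor\theta\rfloor\ge(q^r-3)/2$, but $\theta<q^r/2$ does not rule out $\lfloor\theta\rfloor=(q^r-1)/2$.

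The fix is immediate and is precisely what the paper does: prove the sharper upper bound $2\theta<q^r-1$, i.e.\ $\sqrt{1+4q^k(q^k-q^r)}<2q^k-q^r$. Squaring, this reduces to $1<q^{2r}$, which is trivial (the paper rewrites $1+4q^k(q^k-q^r)=(2q^k-q^r)^2-q^{2r}+1$ to make this transparent). With $q^r-2<2\theta<q^r-1$ one then gets $\lfloor\theta\rfloor=(q^r-2)/2$ for even $q$ and $\lfloor\theta\rfloor=(q^r-3)/2$ for odd $q$, as required.
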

\begin{Proof}
  We have
  $$
    2\theta=\sqrt{1+4q^k(q^k-q^r)}-(2q^k-2q^r+1)
    =\sqrt{\left(2q^k-q^r\right)^2-q^{2r}+1}-(2q^k-2q^r+1)<q^r-1.
  $$
  Since $1+4q^k(q^k-q^r)= 1+4q^{2k}-4q^{k+r}>
  \left(2q^k-(q^r+1)\right)^2 = 4q^{2k} -4q^{k+r} -4q^k+ q^{2r}+2q^r+1$ for $k\ge 2r$ and $q\ge 2$, we have
  $2\theta >q^r-2$. Thus, we have $\left\lfloor\theta\right\rfloor=(q^r-2)/2$ for $q$ even and 
  $\left\lfloor\theta\right\rfloor=(q^r-3)/2$ for $q$ odd. 
\end{Proof}

We remark that the formula for $\left\lfloor\theta\right\rfloor$ in Lemma~\ref{lemma_special_theta_bound} does not 
depend on $k$ (supposing that $k$ is sufficiently large).

\begin{Theorem}
  \label{thm_spread_exact_value_3}
  For integers $t\ge 1$ and $k\ge 4$, we have $A_2(k(t+1)+2,2k;k)=\frac{2^{k(t+1)+2}-3\cdot 2^{k}-1}{2^k-1}$.
\end{Theorem}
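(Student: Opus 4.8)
The plan is to prove matching lower and upper bounds, writing $n=k(t+1)+2$ throughout. For the lower bound I would invoke Observation~\ref{obs_multi_component} with $q=2$: since $k\ge 4$ we have $n\bmod k=2$, so the construction yields a partial $k$-spread in $\F{n}{2}$ of size
$\frac{2^n-2^{k+2}+2^k-1}{2^k-1}=\frac{2^{k(t+1)+2}-3\cdot 2^k-1}{2^k-1}$,
which is exactly the asserted value. It then remains to prove the matching upper bound; in the parametrisation $A_2(n,2k;k)=4\cdot\frac{2^{k(t+1)}-1}{2^k-1}-s$ this means improving the bound $s\ge 2$ coming from Theorem~\ref{thm_partial_spread_4} and Lemma~\ref{lemma_special_theta_bound} (note $\lfloor\theta\rfloor=\lfloor(2^2-2)/2\rfloor=1$ here) to $s\ge 3$. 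Equivalently, I must rule out a partial $k$-spread of size $N:=\frac{2^n-2^{k+1}-2}{2^k-1}$.

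So suppose such a partial $k$-spread exists. Adjoining its $\delta:=2^n-1-N(2^k-1)=2^{k+1}+1$ holes, viewed as $1$-dimensional subspaces, produces a vector space partition $\mathcal P$ of $\F{n}{2}$ of type $k^{N}1^{\delta}$. The crux is to intersect $\mathcal P$ with hyperplanes. Fix a hyperplane $H$, let $a_H$ be the number of spread elements contained in $H$, and $b_H$ the number of holes on $H$; then $\mathcal P_H$ is a vector space partition of $\F{n-1}{2}=\F{k(t+1)+1}{2}$ of type $k^{a_H}(k-1)^{N-a_H}1^{b_H}$. Counting the $2^{n-1}$ points of $\F{n}{2}\setminus H$, each of the $N-a_H$ spread elements not inside $H$ contributes $2^{k-1}$ of them and the $\delta-b_H$ holes off $H$ contribute the rest, so $2^{n-1}=(N-a_H)\,2^{k-1}+(\delta-b_H)$. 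Reducing modulo $2^{k-1}$ (using $n-1>k-1$) gives $b_H\equiv\delta\equiv 1\pmod{2^{k-1}}$, whence $b_H\in\{1,\,2^{k-1}+1,\,2^k+1,\,3\cdot 2^{k-1}+1,\,2^{k+1}+1\}$; moreover $a_H$ (hence the whole type of $\mathcal P_H$) is pinned down once $b_H$ is fixed.

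I would then eliminate the two smallest values of $b_H$. Since $a_H\le A_2(n-1,2k;k)<N$ by Theorem~\ref{thm_almost_spread}, $\mathcal P_H$ always contains a $(k-1)$-subspace, so its tail lies in dimension $1$ with second-smallest dimension $k-1\ge 3>2$; as $2^{k-2}\nmid 1$, Theorem~\ref{thm_length_of_tail}(ii) forces $b_H=2^{k-1}-1$ or $b_H>2^{k-1}$, excluding $b_H=1$. For $b_H=2^{k-1}+1$, the equation above gives $N-a_H=2^{n-k}-3=2^{kt+2}-3$, and substituting $N=\frac{2^n-2^{k+1}-2}{2^k-1}$ yields $a_H=\frac{2^{kt+2}+2^k-5}{2^k-1}$, so $\mathcal P_H$ has precisely the type $k^{n_k}(k-1)^{n_{k-1}}1^{1+2^{k-1}}$ shown impossible in Lemma~\ref{lemma_forbidden_type_6}; hence $b_H\ne 2^{k-1}+1$. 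Therefore $b_H\ge 2^k+1$ for every hyperplane $H$. Finally, double counting incidences gives $\sum_H b_H=\delta\cdot\gaussm{n-1}{1}{2}=(2^{k+1}+1)(2^{n-1}-1)$ over $\gaussm{n}{1}{2}=2^n-1$ hyperplanes, so the average of $b_H$ equals $\frac{(2^{k+1}+1)(2^{n-1}-1)}{2^n-1}<2^k+1$ — this inequality reduces to $2^{n-1}<2^n+2^k$, which is trivially true. This contradicts $b_H\ge 2^k+1$ for all $H$, so no partial $k$-spread of size $N$ exists, establishing the upper bound and hence the theorem. I expect the one genuinely delicate point to be the bookkeeping in the preceding paragraph: checking that the value of $a_H$ forced by $b_H=2^{k-1}+1$ is exactly $n_k$ (and $N-a_H$ exactly $n_{k-1}$), which is what makes Lemma~\ref{lemma_forbidden_type_6} apply verbatim; the congruence, the tail exclusion, and the final averaging are comparatively routine.
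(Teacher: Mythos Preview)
Your proposal is correct and follows essentially the same route as the paper: the lower bound via Observation~\ref{obs_multi_component}, the reduction to excluding a partition of type $k^{N}1^{2^{k+1}+1}$, the hyperplane intersection yielding $b_H\equiv 1\pmod{2^{k-1}}$, the elimination of $b_H=1$ by the tail condition and of $b_H=2^{k-1}+1$ by Lemma~\ref{lemma_forbidden_type_6}, and a concluding double count. The only cosmetic difference is that the paper finishes by double counting the $k$-subspaces lying in hyperplanes (bounding $a_H\le n_k-1$), whereas you double count the holes (bounding $b_H\ge 2^k+1$); since $a_H$ and $b_H$ are affinely related by your equation $2^{n-1}=(N-a_H)2^{k-1}+(\delta-b_H)$, the two contradictions are equivalent.
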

\begin{proof}
  Applying Lemma~\ref{lemma_special_theta_bound} and Theorem~\ref{thm_partial_spread_4} yields 
  $$A_2(k(t+1)+2,2k;k)\le \frac{2^{k(t+1)+2}-2^{k+1}-2}{2^k-1}.$$
  Assuming that the upper bound $m:=\frac{2^{k(t+1)+2}-2^{k+1}-2}{2^k-1}$ is attained,  
  we obtain 
  a vector space partition $\mathcal{P}$ of type $k^{m} 1^{2^{k+1}+1}$, i.e., the $m$ $k$-dimensional codewords leave over 
  $\gaussm{k(t+1)+2}{1}{2}-m\cdot \gaussm{k}{1}{2}=2^{k(t+1)+2}-1-\frac{2^{k(t+1)+2}-2^{k+1}-2}{2^k-1}
  \cdot \left(2^{k}-1\right)=2^{k+1}+1$ holes. Now we consider the intersection of $\mathcal{P}$ with a hyperplane $H$. 
  Since the codewords end up as $k$- or $(k-1)$-dimensional subspaces summing up to $m$ , the number of holes is at most 
  $2^{k+1}+1$, and the total number of points is given by $\gaussm{k(t+1)+1}{1}{2}=2^{k(t+1)+1}-1$, we obtain the following 
  list of possible types of $\mathcal{P}_H$:
  \begin{enumerate}
    \item $k^{n_k+1} (k-1)^{n_{k-1}-1} 1^1$
    \item $k^{n_k} (k-1)^{n_{k-1}} 1^{1+2^{k-1}}$
    \item $k^{n_k-1} (k-1)^{n_{k-1}+1} 1^{1+2^{k}}$
    \item $k^{n_k-2} (k-1)^{n_{k-1}+2} 1^{1+3\cdot 2^{k-1}}$
    \item $k^{n_k-3} (k-1)^{n_{k-1}+3} 1^{1+2^{k+1}}$,
  \end{enumerate}
  where $n_k=\frac{2^{kt+2}+2^{k}-5}{2^k-1}$ and $n_{k-1}=2^{kt+2}-3$.    

  Due to Theorem~\ref{thm_length_of_tail}, case~(1) is impossible. The case~(2) is ruled out by Lemma~\ref{lemma_forbidden_type_6}. 
  Thus, each of the $\gaussm{k(t+1)+2}{k(t+1)+1}{2}=2^{k(t+1)+2}-1$ hyperplanes contains at most $n_k-1$ subspaces of dimension $k$. 
  Since each $k$-dimensional subspace is contained in $\gaussm{kt+2}{kt+1}{2}=2^{kt+2}-1$ hyperplanes, the total number of $k$-dimensional 
  subspaces in $\mathcal{P}$ can be at most
  \begin{eqnarray*}
    \frac{\left(2^{k(t+1)+2}-1\right)\cdot \left(n_k-1\right)}{2^{kt+2}-1}
    &=&\frac{2^{k(t+1)+2}-1}{2^k-1}-3\cdot \frac{2^{k(t+1)+2}-1}{\left(2^k-1\right)\cdot\left(2^{kt+2}-1\right)}\\  
    &\overset{k>0}{<}&\frac{2^{k(t+1)+2}-3\cdot 2^k-1}{2^k-1},
  \end{eqnarray*}
  a contradiction. Thus we have $A_2(k(t+1)+2,2k;k)\le \frac{2^{k(t+1)+2}-3\cdot 2^{k}-1}{2^k-1}$. A 
  construction for $A_2(k(t+1)+2,2k;k)\ge \frac{2^{k(t+1)+2}-3\cdot 2^{k}-1}{2^k-1}$ is given by
  Observation~\ref{obs_multi_component}.
\end{proof}

\begin{Corollary}
  \label{cor_spread_exact_value_1}
  For each integer $k\ge 4$ we have $A_2(2k+2,2k;k)=2^{k+2}+1$.
\end{Corollary}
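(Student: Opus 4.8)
The plan is to derive Corollary~\ref{cor_spread_exact_value_1} directly from Theorem~\ref{thm_spread_exact_value_3} by specializing the parameters. First I would set $t=1$ in Theorem~\ref{thm_spread_exact_value_3}, which is legitimate since the theorem is stated for all $t\ge 1$ and $k\ge 4$, and the corollary assumes exactly $k\ge 4$. With $t=1$ the dimension of the ambient space becomes $k(t+1)+2 = 2k+2$, so the theorem reads $A_2(2k+2,2k;k) = \frac{2^{2k+2}-3\cdot 2^k-1}{2^k-1}$.

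Next I would simplify the right-hand side. The numerator is $2^{2k+2}-3\cdot 2^k-1 = 4\cdot 2^{2k} - 3\cdot 2^k - 1$. Factoring over the denominator $2^k-1$: writing $x = 2^k$, we have $4x^2 - 3x - 1 = (x-1)(4x+1)$, so $\frac{4x^2-3x-1}{x-1} = 4x+1 = 2^{k+2}+1$. Substituting back $x=2^k$ gives $A_2(2k+2,2k;k) = 2^{k+2}+1$, which is the claimed value. The factorization is the only computation involved and is routine.

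I do not anticipate any genuine obstacle here: the corollary is purely a numerical specialization of the main theorem, so the entire content is the observation that $\frac{2^{2k+2}-3\cdot 2^k-1}{2^k-1}$ collapses to the clean closed form $2^{k+2}+1$. The one point worth stating explicitly in the write-up is that the hypothesis $k\ge 4$ of the corollary matches the hypothesis of Theorem~\ref{thm_spread_exact_value_3} and that $t=1$ satisfies $t\ge 1$, so no edge case is being overlooked; the case $k=3$ (which would correspond to $n=8$) is already covered separately by Theorem~\ref{thm_spread_k_3}(a) with $m=2$, but it is outside the scope of this corollary.
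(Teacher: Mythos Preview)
Your proposal is correct and matches the paper's approach: the corollary is stated without proof immediately after Theorem~\ref{thm_spread_exact_value_3}, being the specialization $t=1$ together with the factorization $4x^2-3x-1=(x-1)(4x+1)$ for $x=2^k$. One tangential slip: the $k=3$ case is handled by Theorem~\ref{thm_spread_k_3}(c) (not (a)), and there the formula $2^{k+2}+1=33$ actually \emph{fails}, since $A_2(8,6;3)=34$---which is precisely why the hypothesis $k\ge 4$ is needed.
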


We remark that Corollary~\ref{cor_spread_exact_value_1} would be wrong for $k=3$, since $A_2(8,6;3)=34>33$, see \cite{spreadsk3}.
And indeed, each extremal code has to contain a hyperplane which is a subspace partition of type $3^5 2^{29} 1^{5}$. Next we try to get 
a bit more information about these extremal codes. To this end, let $a_i$ denote the number of hyperplanes containing exactly $2\le i\le 5$ 
three-dimensional codewords and $17\ge 25-4i>1$ holes. The \emph{standard equations} for our parameters are given by
\begin{eqnarray*}
  a_2+a_3+a_4+a_5     &=& \gaussm{8}{7}{2}=255\\
  2a_2+3a_3+4a_4+5a_5 &=& \gaussm{5}{1}{2}\cdot A_2(8,6;3)=1054\\
  a_2+3a_3+6a_4+10a_5 &=& {{A_2(8,6;3)} \choose 2}=1683.
\end{eqnarray*}
Solving the equation system in terms of $a_5$ yields $a_2=51-a_5$, $a_3=3a_5-136$, and $a_4=340-3a_5$.
Since the $a_i$ have to be non-negative, we obtain $46\le a_5\le 51$ and $0\le a_2\le 5$. Now let $L$ 
be the subspace generated by the $17$ holes. Since $17>15$ we have $\dim(L)\in \{5,6,7,8\}$. A hyperplane 
containing $2$ codewords contains all $17$ holes so that the set of hyperplanes of this type corresponds to 
the set of hyperplanes containing $L$ as a subspace, i.e., $\dim(L)=8-i$ is equivalent to $a_2=2^i-1$ for 
$0\le i\le 3$. Thus, the list of theoretically possible spectra is given by
$(0,17,187,51)$, $(1,14,190,50)$, and $(3,8,196,48)$, i.e., there are at least $48$ hyperplanes of type 
$3^52^{29}1^5$.

We remark that Lemma~\ref{lemma_forbidden_type_6} can be generalized to arbitrary odd\footnote{For even $q>2$ the tail condition of
Theorem~\ref{thm_length_of_tail} cannot be applied directly in the proof of Lemma~\ref{lemma_forbidden_type_7}.} prime powers $q$ 
along the same lines:
\begin{Lemma}
  \label{lemma_forbidden_type_7}
  For integers $t\ge 1$, $k\ge 4$, and odd $q$, no vector space partition of type $k^{p-1} (k-1)^{m-p+1} 1^{\frac{q+1}{2}+q^{k-1}}$ exists in 
  $\F{k(t+1)+1}{q}$, where $p=\frac{q^{kt+2}-q^2}{q^k-1}+\frac{q+1}{2}$ and $m=\frac{q^{k(t+1)+2}-q^2}{q^k-1}-\frac{q^2-1}{2}$.
\end{Lemma}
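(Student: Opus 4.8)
The plan is to transcribe the proof of Lemma~\ref{lemma_forbidden_type_6}, with the quantity $\tfrac{q+1}{2}$ taking over the role played there by $1$. Suppose a vector space partition $\mathcal{P}$ of the stated type exists in $\F{k(t+1)+1}{q}$, and fix an arbitrary hyperplane $H$. Intersecting each member of $\mathcal{P}$ with $H$ produces a vector space partition $\mathcal{P}_H$ of $H\cong\F{k(t+1)}{q}$. Since $k\ge 4$, the $p-1$ members of dimension $k$ and the $m-p+1$ members of dimension $k-1$ each survive with dimension in $\{k,k-1,k-2\}$, hence at least $2$, giving exactly $m$ ``non-holes'' of $\mathcal{P}_H$ of dimension at least $k-2$; each of the $\tfrac{q+1}{2}+q^{k-1}$ tail elements of $\mathcal{P}$ either stays a point of $H$ or disappears. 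Hence the number $L_H$ of holes of $\mathcal{P}_H$ equals the number of tail elements of $\mathcal{P}$ contained in $H$, so $0\le L_H\le \tfrac{q+1}{2}+q^{k-1}$.

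I would then run a point count in $H$ modulo $q^{k-2}$. A subspace of dimension $d\ge k-2$ covers $\tfrac{q^d-1}{q-1}\equiv\tfrac{q^{k-2}-1}{q-1}\pmod{q^{k-2}}$ points; summing over the $m$ non-holes, comparing with $\gaussm{k(t+1)}{1}{q}-L_H$, and using $\gaussm{k(t+1)}{1}{q}\equiv\tfrac{q^{k-2}-1}{q-1}\pmod{q^{k-2}}$ yields $(q-1)L_H\equiv m-1\pmod{q^{k-2}}$. From the closed form $m=q^2\cdot\tfrac{q^{k(t+1)}-1}{q^k-1}-\tfrac{q^2-1}{2}$ one computes $m\equiv\tfrac{q^2+1}{2}\pmod{q^{k-2}}$ (the residue of the term $q^2$ must be treated separately for $k=4$, where it vanishes, and for $k\ge 5$, but the conclusion is the same), so $(q-1)L_H\equiv\tfrac{q^2-1}{2}=(q-1)\cdot\tfrac{q+1}{2}\pmod{q^{k-2}}$ and hence, since $\gcd(q-1,q^{k-2})=1$ and $q$ is odd, $L_H\equiv\tfrac{q+1}{2}\pmod{q^{k-2}}$. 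As $0<\tfrac{q+1}{2}<q^{k-2}$, the bound on $L_H$ forces $L_H=\tfrac{q+1}{2}+j\,q^{k-2}$ for some $j\in\{0,1,\dots,q\}$. The case $j=0$, i.e.\ $L_H=\tfrac{q+1}{2}$, is excluded by Theorem~\ref{thm_length_of_tail}: $\mathcal{P}_H$ is nontrivial, so its tail has length at least $q+1>\tfrac{q+1}{2}$ (and $L_H\ne 0$ anyway, since $q^{k-2}\nmid\tfrac{q+1}{2}$). Thus $j\ge 1$, i.e.\ every hyperplane satisfies $L_H\ge\tfrac{q+1}{2}+q^{k-2}$.

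Finally I would double count incidences between the $\tfrac{q+1}{2}+q^{k-1}$ tail elements of $\mathcal{P}$ and the $\tfrac{q^{k(t+1)+1}-1}{q-1}$ hyperplanes of $\F{k(t+1)+1}{q}$: each point lies in $\tfrac{q^{k(t+1)}-1}{q-1}$ of them, so the average of $L_H$ over all hyperplanes is $\bigl(\tfrac{q+1}{2}+q^{k-1}\bigr)\cdot\tfrac{q^{k(t+1)}-1}{q^{k(t+1)+1}-1}<\tfrac1q\bigl(\tfrac{q+1}{2}+q^{k-1}\bigr)=\tfrac{q+1}{2q}+q^{k-2}<\tfrac{q+1}{2}+q^{k-2}$, where both inequalities use $q>1$. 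This contradicts $L_H\ge\tfrac{q+1}{2}+q^{k-2}$ for every $H$, proving the lemma.

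The only step requiring genuine care is the congruence $m\equiv\tfrac{q^2+1}{2}\pmod{q^{k-2}}$, which must be extracted from the formula for $m$ (most cleanly via $m(q^k-1)=q^{k(t+1)+2}-q^2-\tfrac{(q^2-1)(q^k-1)}{2}$ reduced modulo $q^{k-2}$), splitting off the case $k=4$; this is bookkeeping rather than a conceptual obstacle. Everything else is a faithful copy of the binary argument with $1$ replaced by $\tfrac{q+1}{2}$, and, as there, only the total count $m$ of members of dimension at least $k-1$ is used — the parameter $p$ never enters. The restriction to odd $q$ is exactly what makes $\tfrac{q+1}{2}$ an integer and the relevant residue (equivalently, it is the input of Lemma~\ref{lemma_special_theta_bound}); for even $q>2$ the tail condition cannot be invoked in the same way, as noted in the footnote.
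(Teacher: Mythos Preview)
Your proposal is correct and follows the same approach as the paper's proof: intersect with a hyperplane, use a point count modulo $q^{k-2}$ to pin down $L_H\equiv\frac{q+1}{2}\pmod{q^{k-2}}$, exclude the smallest value via the tail bound of Theorem~\ref{thm_length_of_tail}, and derive a contradiction by double counting hole--hyperplane incidences. Your version is somewhat more explicit about the congruence for $m$ (including the $k=4$ case split) and phrases the double count as an average rather than a total, but these are presentational differences only; the observation that $p$ plays no role is also correct.
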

\begin{proof}
  Assume the existence of a vector space partition $\mathcal{P}$ of the specified type. Now we consider the intersection 
  with an arbitrary hyperplane $H$. Since the non-holes of $\mathcal{P}$ end up as $m$ subspaces, with dimensions in 
  $\{k,k-1,k-2\}$, in $\mathcal{P}_H$ and the total number of points is given by $\gaussm{k(t+1)}{1}{q}$, the number of holes 
  $L_H$ in $\mathcal{P}_H$ has to satisfy $L_H\equiv \frac{q+1}{2}\pmod {q^{k-2}}$. Using $L_H\le \frac{q+1}{2}+q^{k-1}$, we conclude
  $L_H\in \left\{\frac{q+1}{2}+iq^{k-2}\,:\, 0\le i\le q\right\}$. 
  Due to the tail condition in Theorem~\ref{thm_length_of_tail}, 
  the case $L_H=\frac{q+1}{2}$ is impossible. Thus, each hyperplane contains at least $\frac{q+1}{2}+q^{k-2}$ holes.  
  Since each hole is contained in $\gaussm{k(t+1)}{k(t+1)-1}{q}$ hyperplanes, we have at least
  $$
    \left(\frac{q+1}{2}+q^{k-2}\right) \cdot \frac{q^{k(t+1)+1}-1}{q^{k(t+1)}-1}
    \,\ge\, \left(\frac{q+1}{2}+q^{k-2}\right)\cdot q\,>\, \frac{q+1}{2}+q^{k-1}
  $$
  holes in total, a contradiction.
\end{proof}

It turns out that repeating the proof of Theorem~\ref{thm_spread_exact_value_3} for odd $q$ just works for $q=3$ and additionally 
the lower bound by the construction of Observation~\ref{obs_multi_component} does not match the improved upper bound. At the 
very least an improvement of the upper bound of Theorem~\ref{thm_partial_spread_4} by one is possible: 
\begin{Lemma}
  \label{lemma_spread_upper_bound_3_q}
  For integers $t\ge 1$ and $k\ge 4$, we have $A_3(k(t+1)+2,2k;k) \le \frac{3^{k(t+1)+2}-3^2}{3^k-1}-\frac{3^2+1}{2}$.
\end{Lemma}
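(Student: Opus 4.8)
The plan is to mimic the proof of Theorem~\ref{thm_spread_exact_value_3}, replacing the binary estimates with their ternary analogues and invoking Lemma~\ref{lemma_forbidden_type_7} in place of Lemma~\ref{lemma_forbidden_type_6}. First I would apply Lemma~\ref{lemma_special_theta_bound} with $q=3$, $r=2$: since $k\ge 4\ge 2r$, we get $\left\lfloor\theta\right\rfloor=\left\lfloor\frac{3^2-3}{2}\right\rfloor=3$, so Theorem~\ref{thm_partial_spread_4} gives the starting bound $A_3(k(t+1)+2,2k;k)\le 3^2\cdot\frac{3^{k(t+1)}-1}{3^k-1}-4=\frac{3^{k(t+1)+2}-9}{3^k-1}-4$. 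One checks this equals $\frac{3^{k(t+1)+2}-9}{3^k-1}-\frac{3^2+1}{2}+1$, i.e.\ it exceeds the claimed bound by exactly one. So it suffices to exclude the extremal case: assume a partial $k$-spread of size $m':=\frac{3^{k(t+1)+2}-9}{3^k-1}-4$ exists and derive a contradiction.

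Such a code gives a vector space partition $\mathcal{P}$ of $\F{k(t+1)+2}{3}$ of type $k^{m'} 1^{h}$, where the number of holes $h=\gaussm{k(t+1)+2}{1}{3}-m'\cdot\gaussm{k}{1}{3}$ is computed from the point count; this should come out to $h=\frac{3^2+1}{2}+3^{k-1}\cdot 3-\text{(small correction)}$—I would compute $h$ explicitly and record it. Next, intersect $\mathcal{P}$ with a hyperplane $H$: the $m'$ codewords drop to subspaces of dimension $k$ or $k-1$ (still $m'$ of them), the ambient space has $\gaussm{k(t+1)+1}{1}{3}$ points, and the number of holes $L_H$ satisfies a congruence modulo $3^{k-2}$ analogous to the binary proof. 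Combined with the upper bound $L_H\le h$, this forces $L_H$ into a short arithmetic-progression list $\{c+i\cdot 3^{k-2}\}$ with leading term $c=\frac{q+1}{2}=2$. The smallest value $L_H=2$ is killed by the tail condition (Theorem~\ref{thm_length_of_tail}), and the second-smallest, $L_H=2+3^{k-2}$, corresponds precisely to a hyperplane section of the type ruled out by Lemma~\ref{lemma_forbidden_type_7}—here I would match the parameters $p$ and $m$ in that lemma against the hyperplane section arising from $\mathcal{P}$, which is the bookkeeping step requiring the most care. Once those two cases are excluded, every hyperplane contains strictly fewer $k$-dimensional members than the maximum, and a double count of incident (codeword, hyperplane) pairs—each $k$-space lies in $\gaussm{kt+2}{kt+1}{3}=3^{kt+2}-1$ hyperplanes—yields an upper bound on $m'$ strictly below $m'$, the desired contradiction.

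The main obstacle, as in the binary case, is Lemma~\ref{lemma_forbidden_type_7}: one must verify that the type of $\mathcal{P}_H$ in the surviving case is exactly $k^{p-1}(k-1)^{m-p+1}1^{\frac{q+1}{2}+q^{k-1}}$ for the $p$ and $m$ defined there. This is a matter of solving the two linear relations (point count in $\F{k(t+1)+1}{q}$ and the constraint that the $k$- and $(k-1)$-dimensional counts sum to $m'$) for the number of each type of subspace and checking the result coincides with $p-1$ and $m-p+1$; the arithmetic is routine but must be done carefully, since an off-by-one here would break the argument. I would also double-check that the final double-counting inequality is strict for all $k\ge 4$ and $t\ge 1$ — this mirrors the $\overset{k>0}{<}$ step in the proof of Theorem~\ref{thm_spread_exact_value_3} and should go through by the same estimate. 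Note that, unlike the binary situation, the lower bound from Observation~\ref{obs_multi_component} falls short of this improved upper bound, so the statement is phrased purely as an inequality and no matching construction is claimed.
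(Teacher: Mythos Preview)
Your approach is exactly the paper's, but you have a slip in the modulus: in the first hyperplane section the codewords drop from dimension $k$ to $k$ or $k-1$, so swapping one $k$-space for a $(k-1)$-space changes the covered point count by $\frac{q^k-1}{q-1}-\frac{q^{k-1}-1}{q-1}=q^{k-1}$, and hence $L_H\equiv \tfrac{q+1}{2}\pmod{q^{k-1}}$, not modulo $q^{k-2}$. (The $q^{k-2}$ modulus you recall from the binary argument occurs one level deeper, inside the proof of Lemma~\ref{lemma_forbidden_type_6}.) With this correction the second-smallest value is $L_H=2+3^{k-1}$, which is precisely the tail length excluded by Lemma~\ref{lemma_forbidden_type_7}, and the rest of your outline---tail condition for $L_H=2$, then the double count over hyperplanes---matches the paper verbatim; note in particular that the final strict inequality in the double count is where $q=3$ (rather than general odd $q$) is actually used.
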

\begin{Proof}
  Applying Lemma~\ref{lemma_special_theta_bound} and Theorem~\ref{thm_partial_spread_4} for odd $q$ yields 
  $$
    A_q(k(t+1)+2,2k;k)\le \frac{q^{k(t+1)+2}-q^2}{q^k-1}-\frac{q^2-1}{2}=:m.
  $$
  Assuming that the upper bound is attained by a code $\mathcal{C}$, the $m$ $k$-dimensional codewords leave at least 
  $$
    \gaussm{k(t+1)+2}{1}{q}-m\cdot \gaussm{k}{1}{q}
    =\frac{q(q+1)}{2}\cdot q^{k-1} +\frac{q+1}{2}=:h
  $$ 
  holes. Now we consider the intersection of $\mathcal{C}$ with a hyperplane. 
  Since the codewords end up as $k$- or $(k-1)$-dimensional subspaces summing up to $m$ , the number of holes is at most 
  $h$, and the total number of points is given by $\gaussm{k(t+1)+1}{1}{q}=\frac{q^{k(t+1)+1}-1}{q-1}$, we obtain the types
  $$
    k^{p-i} (k-1)^{m-p+i} 1^{\frac{q+1}{2}+iq^{k-1}}
  $$
  for $0\le i\le \frac{q(q+1)}{2}$, where $p:=\frac{q^{kt+2}-q^2}{q^k-1}+\frac{q+1}{2}$.
  
  Due to Theorem~\ref{thm_length_of_tail}, case $i=0$ is impossible. The case $i=1$ is ruled out by Lemma~\ref{lemma_forbidden_type_7}. 
  Thus, each of the $\gaussm{k(t+1)+2}{k(t+1)+1}{q}$ hyperplanes contains at most $p-2$ subspaces of dimension $k$. 
  Since each $k$-dimensional subspace is contained in $\gaussm{kt+2}{kt+1}{q}$ hyperplanes, the total number of $k$-dimensional 
  subspaces in $\mathcal{C}$ can be at most
  \begin{eqnarray*}
     && \frac{(p-2)\cdot \gaussm{k(t+1)+2}{k(t+1)+1}{q}}{\gaussm{kt+2}{kt+1}{q}}
     = \frac{\left( \frac{q^{kt+2}-q^2}{q^k-1}+\frac{q-3}{2} \right)\cdot\left( q^k(q^{kt+2}-1)+q^k-1 \right)}
    {q^{kt+2}-1}\\
    &=& \frac{q^{k(t+1)+2}-q^2-q^{k+2}+q^2}{q^k-1}+\frac{q-3}{2}\cdot q^k +\frac{q^{kt+2}-q^2+\frac{q-3}{2}\cdot\left(q^k-1\right)}
    {q^{kt+2}-1}\\
    &\overset{q=3}{=}& \frac{q^{k(t+1)+2}-q^2}{q^k-1}-q^2+
    \frac{q^{kt+2}-q^2}{q^{kt+2}-1}\\
    &<& \frac{q^{k(t+1)+2}-q^2}{q^k-1}-q^2+1
    \overset{q>1}{<} \frac{q^{k(t+1)+2}-q^2}{q^k-1}-\frac{q^2-1}{2}=m       
  \end{eqnarray*}
  a contradiction. Thus we have $A_3(k(t+1)+2,2k;k)\le \frac{3^{k(t+1)+2}-3^2}{3^k-1}-\frac{3^2+1}{2}$.
\end{Proof}

\section{Conclusion}
\label{sec_conclusion}

\noindent
For the size of a maximum partial $k$-spread in $\F{n}{q}$ the exact formula
$A_q(k(t+1)+r,2k;k)=q^r\cdot\frac{q^{k(t+1)}-1}{q^k-1}-q^r+1$ was conjectured for some time, where 
$n=k(t+1)+r$ and $1\le r\le k-1$. Codes with these parameters can easily be obtained via combining 
some MRD codes, see Observation~\ref{obs_multi_component}. However, the conjecture is false for $q=2$, $k=3$, $n\equiv 2\pmod 3$, 
and $n\ge 8$, as we know since \cite{spreadsk3}. In this paper we have shown that the conjecture is true for $q=2$, $k\ge 4$,  
$n\equiv 2\pmod k$, and $n\ge 2k+2$. With respect to upper bounds, Theorem~\ref{thm_partial_spread_4} is one of the 
most general and sweeping theoretical tools. For the spread case, i.e., $n\equiv 0\pmod k$, it was sufficient to consider 
the (empty) set of holes. The main idea of Beutelspacher for the case $n\equiv 1\pmod k$, may roughly be described 
as the consideration of holes in the projections of partial $k$-spreads in hyperplanes. In this sense, our work 
is just the continuation of projecting two times.\footnote{The specific use of Theorem~\ref{thm_length_of_tail} is 
just a shortcut, resting on the same rough idea. However, it points to an area where even more theoretic results are 
available, that possibly can be used in more involved cases.} If $k\ge 4$ the projected codewords can be distinguished from the 
holes by the attained dimensions. So, we naturally ask whether our result can be generalized to arbitrary $q$. In 
Lemma~\ref{lemma_spread_upper_bound_3_q} we were able to reduce the previously best known upper bound by $1$ for the 
special field size $q=3$. Looking closer at our arguments shows that for further progress additional ideas are needed. 

In general, 
one may project $k-2$ times without being confronted with an interference between the projected codewords and the set of holes 
contained in the $(n-k+2)$-dimensional subspaces. Can this rough idea be used to obtain improved upper bounds for $r\ge 3$ and 
$k\ge r+2$?\footnote{In this context, we would like to mention the very recent preprints 
\cite{kurz_2016_upper_bounds_partial_spreads,nastase2016maximum} including the bound $A_2(11,8;4)\le 132$.} 

Our main result suggests that the code attaining $A_2(8,6;3)=34$ is somehow \textit{specific}. As mentioned before, 
it cannot be obtained by the construction from Observation~\ref{obs_multi_component}. Even more, it cannot be obtained 
by the more general, so-called, Echelon-Ferrers (or multi-level) construction from \cite{etzion2009error}. So, a better understanding 
of the corresponding codes might be the key for possibly better constructions beating the currently best known 
lower bounds for e.g.\  $A_2(11,8;4)$ or $A_2(14,10;5)$. 

We would like to mention a new on-line table 
for upper and lower bounds for subspace codes at 
\begin{center}
\url{http://subspacecodes.uni-bayreuth.de}, 
\end{center}
see also \cite{TableSubspacecodes} for a brief manual and description of the methods implemented so far.
Actually, our research was 
initiated by looking for the smallest set of parameters, in the binary partial spread case, where the currently known lower and 
the upper bounds differ by exactly $1$: $65\le A_2(10,8;4)\le 66$. The other cases with a difference of one are exactly those 
that we finally covered by Theorem~\ref{thm_spread_exact_value_3}. Now, the \textit{smallest} unknown maximal cardinality of 
a partial $k$-spread over $\F{n}{2}$ is given by $129 \le A_2(11,8;4)\le 133$ and also the other cases, where the upper and the 
lower bound are exactly $4$ apart, show an obvious pattern. At least for us, the mentioned database was very valuable. 
As it commonly happens that formerly known results were rediscovered by different authors, we would appreciate any 
comments on existing results, that are not yet included in the database, very much.  

Partial $k$-spreads have applications in the construction of orthogonal arrays and $(\overline{s},\overline{r},\mu)$-nets\footnote{
Using the notation from this paper, we have $\overline{s}=q^k$, $\overline{r}=A_q(n,2k;k)$, and $\mu=q^{n-2k}$.}, see \cite{nets_and_spreads}. 
Thus, Theorem~\ref{thm_spread_exact_value_3} also implies restrictions for these objects. The derivation of the explicit corollaries 
goes along the same lines as presented in \cite{spreadsk3}.

\section*{Acknowledgements}

\noindent
The author thanks the referees for carefully reading a preliminary version of this article and giving very useful comments on its 
presentation.


\end{document}